\newtheorem{thm}{Theorem}[section]
\newtheorem{prop}[thm]{Proposition}
\newtheorem{lem}[thm]{Lemma}
\newcommand{\NN}{\mathbb{N}}
\newcommand{\ZZ}{\mathbb{Z}}
\newcommand{\RR}{\mathbb{R}}
\newcommand{\x}{\Tilde{x}}
\newcommand{\klj}{K^{l,j}}
\newcommand{\tlj}{H^{l,j}}
\newcommand{\clj}{\mathds{1}^{l,j}}
\newcommand{\maj}{K^{l,j}*\mathcal{F}^{-1}(\Psi_j)}
\newcommand{\slog}{S_n^l}
\newcommand{\tlog}{T_n^l}
\newcommand{\mq}[1]{M[{#1}^{q}]}
\DeclarePairedDelimiter{\abs}{\lvert}{\rvert}
\DeclarePairedDelimiter{\norm}{\lVert}{\rVert}
\DeclareRobustCommand{\[}{\begin{equation*}}
\DeclareRobustCommand{\]}{\end{equation*}}
\def\XXint#1#2#3{{\setbox0=\hbox{$#1{#2#3}{\int}$ }
\vcenter{\hbox{$#2#3$ }}\kern-.6\wd0}}
\title[A Cotlar Type Maximal Function Associated With Fourier Multipliers]
{A Cotlar Type Maximal Function Associated With Fourier Multipliers}
\date{\today}
\subjclass[2010]{42B15,42B20,42B25}
\author{Rajula~Srivastava}
\address{
Department of Mathematics\\ 
University of Wisconsin-Madison\\
480 Lincoln Dr, Madison\\
WI-53706\\
USA}
\email{rsrivastava9@wisc.edu}
\begin{document}
\begin{abstract}
We prove the $L^p$ boundedness of a maximal operator associated with a dyadic frequency decomposition of a Fourier multiplier, under a weak regularity assumption.
\end{abstract}
\maketitle

\section{Introduction}
Consider a Mikhlin-Hörmander multiplier $m$ on $\RR^d$ satisfying the assumption
\begin{equation}
\abs{\partial^\gamma m(\xi)}\leq A\abs{\xi}^{-\abs{\gamma}}
\label{mhtype}
\end{equation}
for all multi-indices $\gamma$ with $\abs{\gamma}\leq L$ for some integer $L>d$.

Let $\chi \in C_c^{\infty}(\RR)$ be supported in $(1/2,2)$ such that $\sum_{j=-\infty}^{\infty}\chi(2^jt)=1$ and let $\phi=\chi(\lvert.\rvert)$. For a given Schwartz function $f$, let $Sf:=\mathcal{F}^{-1}[m\hat{f}]$, and for $n\in \ZZ$ let $S_n$ be defined by
\begin{equation}
\widehat{S_nf}(\xi):=\sum_{j\leq n}\phi(2^{-j}\xi)m(\xi)\hat{f}(\xi).
\label{opdefn}
\end{equation}
We are interested in bounds for the maximal function 
\begin{equation}
S_*f(x):=\underset{n\in \ZZ}{\mathrm{sup}}\,\abs{S_nf(x)}.
\label{opdefnmax}
\end{equation}
The above operator was studied by Guo, Roos, Seeger and Yung in [\cite{guo2019maximal}, in connection with proving $L^p$ bounds for a maximal operator associated with families of Hilbert transforms along parabolas. The multiplier $m$ in [\cite{guo2019maximal} was assumed to satisfy the condition
\begin{equation}
\label{initialassump}
\sup_{t>0}\,\norm{\phi m(t\cdot)}_{\mathcal{L}^1_{\beta}}=B(m)<\infty   
\end{equation}
with $\beta>d$. Here $\mathcal{L}^1_{\beta}$ is the potential space of functions $g$ with $(I-\Delta)^{\beta/2}g\in L^1$ (we note the analogy with condition (\ref{mhtype}) here). With the above hypothesis, the authors were able to prove a pointwise Cotlar-type inequality
\begin{equation}
\label{initialCotlar}
S_*f(x)\leq \frac{1}{(1-\delta)^{1/r}}(M(\abs{Sf}^r)(x))^{1/r}+C_{d,\beta}\delta^{-1}B(m)Mf(x)    
\end{equation}
for $f\in L^p(\RR^d)$ and for almost every $x$ (with $ r>0$ and $0<\delta\leq 1/2)$. Here $M[f]$ denotes the standard Hardy-Littlewood Maximal function. (\ref{initialCotlar}) easily implies $L^p$ and weak (1,1) bounds for the maximal operator $S_*$. 

It is natural to ask if one could weaken the assumption (\ref{initialassump}) and still establish $L^p$ bounds on the operator $S_*$, possibly without the intermediate step of proving a pointwise inequality of the form (\ref{initialCotlar}) for all exponents $r$. In this paper, we answer this question in the affirmative. In particular, we show that for the $L^p$ bounds on $S_*$ to hold, it is enough for the multiplier $m$ to satisfy a much weaker condition 
\begin{equation}
\sup_ {t>0} \int | \mathcal{F}^{-1} [\phi m(t\cdot)](x)| (\log{(2+|x|)})^{\alpha} dx :=B(m)<\infty,\qquad \alpha >3.    
\label{assump}
\end{equation}
Our main result is the following:
\begin{thm}\label{main thm}
$S_*$, as defined in (\ref{opdefnmax}) for a multiplier $m$ satisfying (\ref{assump}), is of weak-type (1,1) and bounded on $L^p$ for $p\in (1,\infty)$, with the respective operator norm $\lesssim_p B(m)$.
\end{thm}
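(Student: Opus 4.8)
The plan is to reduce the theorem to two essentially independent pieces: a pointwise domination of $S_*$ by a composition of classical maximal functions, which yields the $L^p$ bounds for $p>1$ almost for free, and a Calder\'on--Zygmund argument for the weak $(1,1)$ endpoint, in which the logarithmic weight in \eqref{assump} does the real work.

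\textbf{Reductions.} Since $B(\cdot)$ and $S_*$ are positively homogeneous in $m$, we may normalise $B(m)=1$. Write $\phi_j:=\phi(2^{-j}\,\cdot\,)$ and let $T_jf:=\mathcal F^{-1}[\phi_j\,m\,\hat f]$, which is convolution with $K_j(x)=2^{jd}G_j(2^jx)$, $G_j:=\mathcal F^{-1}[\phi\,m(2^j\cdot)]$. Then \eqref{assump} rescales to $\int\abs{K_j(x)}\,(\log(2+2^j\abs{x}))^\alpha\,dx\le1$ for all $j$, and, writing $\xi\,\phi_j(\xi)=2^j\,\tilde\phi(2^{-j}\xi)$ with $\tilde\phi(\eta)=\eta\,\phi(\eta)$ and factoring $\tilde\phi$ through a fattened copy of $\phi$, one likewise gets $\int\abs{\nabla K_j(x)}\,(\log(2+2^j\abs{x}))^\alpha\,dx\lesssim2^j$. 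In particular $\norm{m}_\infty\lesssim\sup_j\norm{K_j}_1\le1$, so $S$ is bounded on $L^2$; moreover, summing the tail bounds $\int_{\abs{x}>R}\abs{K_j}\le(\log(2+2^jR))^{-\alpha}$ and $\int_{\abs{x}>R}\abs{\nabla K_j}\lesssim2^j(\log(2+2^jR))^{-\alpha}$ over $j$ --- a convergent sum since $\alpha>1$ --- shows that the kernel of $S=\sum_jT_j$ satisfies H\"ormander's integral condition, and the analogue for its adjoint, with constant $\lesssim1$, so $S$ is bounded on $L^p$ for every $p\in(1,\infty)$. Finally $\sum_{j\le n}\phi_j=\Phi(2^{-n}\cdot)$ for a fixed $\Phi\in C^\infty_c(\RR^d)$ with $\Phi\equiv1$ near the origin, whence $S_nf=2^{nd}(\mathcal F^{-1}\Phi)(2^n\cdot)*Sf$ and therefore
\[
S_*f(x)\lesssim M(Sf)(x)\qquad\text{for a.e.\ }x.
\]
Together with the $L^p$ bounds for $S$ this gives $\norm{S_*f}_p\lesssim_p\norm{Sf}_p\lesssim_p\norm{f}_p$ for all $p\in(1,\infty)$, so only the weak $(1,1)$ bound remains.

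\textbf{The endpoint.} Here I would perform a Calder\'on--Zygmund decomposition $f=g+b$, $b=\sum_Qb_Q$, at height $\lambda$, with $\norm{g}_\infty\lesssim\lambda$, $b_Q$ supported in $Q$, $\int b_Q=0$, $\norm{b_Q}_1\lesssim\lambda\abs{Q}$, and $\sum_Q\abs{Q}\lesssim\lambda^{-1}\norm{f}_1$; put $\Omega^*=\bigcup_QQ^*$, so $\abs{\Omega^*}\lesssim\lambda^{-1}\norm{f}_1$. The good part is handled by the $L^2$ bound: $\norm{S_*g}_2\lesssim\norm{Sg}_2\lesssim\norm{g}_2\lesssim(\lambda\norm{f}_1)^{1/2}$, so $\abs{\{S_*g>\lambda\}}\lesssim\lambda^{-1}\norm{f}_1$ by Chebyshev's inequality. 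For the bad part it suffices to show $\int_{(\Omega^*)^c}S_*b\lesssim\norm{f}_1$; since $S_*b\le\sum_QS_*b_Q$, $(\Omega^*)^c\subseteq(Q^*)^c$, and $S_*b_Q\le\sum_j\abs{T_jb_Q}$ pointwise, this comes down to
\[
\sum_j\int_{(Q^*)^c}\abs{T_jb_Q(x)}\,dx\lesssim\lambda\abs{Q}\qquad\text{for each }Q.
\]
It is important here \emph{not} to estimate the kernel $\mathcal F^{-1}[\Phi(2^{-n}\cdot)m]$ of $S_n$ directly: bounded scale by scale against the weight $(\log(2+\abs{x}))^\alpha$ it diverges (the $j$-th piece contributing a factor that grows like $(1+(n-j))^\alpha$), so the passage to the frequency-localised pieces $T_j$ is essential. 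Splitting the $j$-sum at $2^j\sim\ell(Q)^{-1}$: for $2^j\ell(Q)\le1$ one uses the cancellation $\int b_Q=0$ together with the gradient bound, getting $\int_{(Q^*)^c}\abs{T_jb_Q}\lesssim\norm{b_Q}_1\,\ell(Q)\int_{\abs{z}\gtrsim\ell(Q)}\abs{\nabla K_j}\lesssim\lambda\abs{Q}\,\ell(Q)2^j$, which sums geometrically; for $2^j\ell(Q)>1$ one uses the size bound together with the tail decay, getting $\int_{(Q^*)^c}\abs{T_jb_Q}\lesssim\norm{b_Q}_1\int_{\abs{z}\gtrsim\ell(Q)}\abs{K_j}\lesssim\lambda\abs{Q}\,(\log(2+2^j\ell(Q)))^{-\alpha}$, which sums like $\sum_{k\ge1}k^{-\alpha}$. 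Summing over $Q$ gives $\int_{(\Omega^*)^c}S_*b\lesssim\lambda\sum_Q\abs{Q}\lesssim\norm{f}_1$; combined with the bound for $\abs{\Omega^*}$ and the good part this yields the weak $(1,1)$ inequality, and undoing the normalisation restores the constant $\lesssim B(m)$. With the $L^p$ bounds already in hand, the proof is complete.

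\textbf{Main obstacle.} The crux is the bad-part estimate, and more precisely the bookkeeping in the sum over scales: one must extract genuine decay in $j$ --- not mere uniformity --- from the weighted kernel bounds, and the coarse domination $S_*b_Q\le\sum_j\abs{T_jb_Q}$, which trades the supremum over $n$ for a triangle inequality over all dyadic scales, gives away several logarithmic factors that must then be absorbed by the weight; making everything sum in the bad part is what forces $\alpha>3$. A more economical treatment of the supremum --- via a square function, or by comparing consecutive $S_n$ rather than summing all $T_j$ --- ought to relax this threshold, but some lower bound on $\alpha$ seems unavoidable, since the individual operators $S_n$ carry no uniform weighted kernel bound.
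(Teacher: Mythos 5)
Your proof is correct, and it takes a genuinely different and substantially simpler route than the paper. The crucial observation you make — which the paper does not use — is that the frequency truncation $\sum_{j\le n}\phi_j$ collapses to $\Phi(2^{-n}\,\cdot\,)$ for a single $\Phi\in C^\infty_c(\RR^d)$ with $\Phi\equiv1$ on $\{\abs{\xi}\le1\}$ and $\operatorname{supp}\Phi\subset\{\abs{\xi}\le 2\}$, so that $S_n=2^{nd}\check{\Phi}(2^n\,\cdot\,)*S$ and hence $S_*f\lesssim M(Sf)$ pointwise. This reduces the $L^p$ bound for $S_*$, $p>1$, to the $L^p$ bound for $S$ alone, which you obtain from the Hörmander integral condition after correctly upgrading \eqref{assump} to the weighted gradient bound (the step $\tilde\phi=\psi\phi$ with $\psi$ smooth and compactly supported, together with the sub-additivity $\log(2+\abs{y})\le\log(2+\abs{y-z})+\log(2+\abs{z})$, is exactly right). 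The weak $(1,1)$ bound then requires a separate Calderón–Zygmund argument — as you note, $M$ does not preserve $L^{1,\infty}$ — and your treatment, passing to the frequency-localised pieces $T_j$, using cancellation and the gradient tail bound at fine scales $2^j\ell(Q)\le1$ and the weighted size tail bound at coarse scales, is also correct. By contrast, the paper decomposes each $K_j$ spatially into dyadic shells $K^{l,j}$, isolates an error term via $1-\Psi_j$, proves a Cotlar-type inequality for each $T^l_*$ with a scale-dependent exponent $q\sim l$, and then sums in $l$; that machinery produces more refined scale-by-scale information but is considerably heavier as a route to Theorem~\ref{main thm}.

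One remark worth flagging: your own estimates do \emph{not} force $\alpha>3$, contrary to what the closing paragraph suggests. The fine-scale part of the bad estimate is a geometric sum, and the coarse-scale part is $\sum_{k\ge1}(\log(2+2^k))^{-\alpha}\sim\sum_{k\ge1}k^{-\alpha}$, which converges for all $\alpha>1$; the same threshold $\alpha>1$ governs the Hörmander condition for $S$. So the argument as you wrote it actually proves the theorem under the weaker hypothesis $\alpha>1$, strictly improving the paper's $\alpha>3$. The self-doubt at the end appears to be a misreading of where the paper's losses come from: the paper's extra powers of $l$ arise from its interpolation between a weak $(1,1)$ bound for $T^l_*$ and an $L^{p_l}$ bound at $p_l\sim l$, an issue your approach avoids entirely because it never trades the supremum in $n$ for a sum over all scales.
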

We remark here that it is not possible to do away with the smoothness assumption entirely. In other words, the condition
\begin{equation*}
\sup_ {t>0} \int | \mathcal{F}^{-1} [\phi m(t\cdot)](x)| dx <\infty   \end{equation*}
alone is not enough to guarantee $L^p$ bounds on the maximal operator $S_*$, or even on the singular operator $S$. For counterexamples, we refer to [\cite{littman1968LpMultiplier}, Section 5 and [\cite{stein1967boundedness}, Section 3.

We shall denote the Littlewood-Paley pieces of $m$ by $m_j$. More precisely, for $j\in \ZZ$, we define $m_j(\xi):=\phi(2^{-j}\xi)m(\xi)$. Furthermore let $a_j(\xi)=m_j(2^j\xi)=\eta(\xi)m(2^j\xi)$. Observe that $supp(m_j)\subset (2^{j-1},2^j)$ and $supp(a_j)\subset (1/2,2)$. Let $K_j=\mathcal{F}^{-1}[m_j]$. Then (\ref{opdefn}) can be re-written as
\begin{equation}
    S_nf(x)=\sum_{j\leq n} K_j*f(x).
    \label{opdefnspace}
\end{equation}

However, in order to quantify the smoothness condition in (\ref{assump}), it is useful to partition $K_j$ (for each $j\in \ZZ$) on the space side as well (see [\cite{grafakos2006maximal}). To this effect, let $\eta_0\in C_c^\infty(\RR^d)$ be such that $\eta_0$ is even, $\eta_0(x)=1$ for $|x|\leq 1/2$ and $\eta_0$ is supported where $|x|\leq 1$. For $l\in \NN$, let $\eta_l(x)=\eta_0(2^{-l}x)-\eta_0(2^{-l+1}x)$. For $l\in \NN\cup \{0\}$ we define
\begin{equation}
K^{l,j}(x)=\eta_l(2^jx)\mathcal{F}^{-1}[m_j](x).
\label{kljdefn}
\end{equation}
By the assumption (\ref{assump}), we then have
\begin{equation}
\label{kljnorm}
\norm{K^{l,j}}_{L^1}\lesssim B(\log{(2+2^l)})^{-\alpha}.
\end{equation}

Now the multiplier corresponding to $K^{l,j}$ is given by $2^{-jd}\hat{\eta_l}(2^{-j}\cdot)*m_j$, which, unlike $m_j$, is not compactly supported. However the rapid decay of $\hat{\eta_l}$ still leads to the multiplier "essentially" being supported in a slightly thicker (but still compact) dyadic annulus, with the other frequency regions contributing negligible error terms. We make these ideas rigorous in Section \ref{error}. The arguments used are similar in spirit to those in [\cite{carbery1986variants}, Section 5. Another source of reference is [\cite{seeger1988some}.
As a corollary, we prove that the singular operator
\begin{equation}
    S^lf(x)=\sum_{j\in \ZZ}\klj*f(x)
    \label{op_l}
\end{equation}
is bounded on $L^2$, with operator norm $\lesssim Bl^{-\alpha}$.

In Section \ref{majsec}, we use Bernstein's inequality (see [\cite{wolff2003lectures}) to establish $L^p$ bounds for the aforementioned portion with the major contribution and with compact frequency support. We also establish a pointwise estimate on its gradient.

In Section \ref{weakSl}, using the estimates from Section \ref{majsec}, we prove that the Calder\'on-Zygmund operator $S^l$ associated to the multiplier $m$ is of weak type (1,1), with the operator norm $\lesssim Bl^{-\alpha+1}$.  We do so by establishing the result for the operator $T^lf:=\sum_{j\in \ZZ}\tlj*f$, where $\tlj$ is the portion of the kernel $\klj$ with the major contribution. The arguments flow in the same vein as those in the proof of the Mikhlin-Hörmander Multiplier Theorem (see [\cite{grafakos2008classical},[\cite{H}). We also establish $L^p$ bounds on $T^l$ using interpolation.

In Section \ref{largesec}, we investigate the properties of the truncated operator $\tlog f:= \sum_{j\leq n} \tlj*f$ (for $n\in \ZZ$ and $l\geq 0$), with an aim of establishing $L^p$ bounds for the associated maximal operator $T^l_*f=\text{sup }\,|\tlog f|$. 
 
We wish to show that for $l\geq 0$, the operator $T^l_*$ is bounded on $L^p$ for $1<p<\infty$ with the corresponding operator norm $\lesssim_p Bl^{-\alpha+1}$ ($\lesssim B$ for $l=0$). Since our assumption (\ref{assump}) on $m$ is much weaker than that in [\cite{guo2019maximal}, a pointwise inequality like (\ref{initialCotlar}) for all $r>0$ seems out of reach. However, by using similar ideas, we are able to establish a Cotlar type inequality
\[
T^l_* f(x)\lesssim_d \frac{1}{(1-\delta)^{1/p}}(M(\abs{T^lf}^p)(x))^{1/p}+2^\frac{ld}{p}l^{-\alpha+1}(1+\delta^{-1/p})B(m)(M(\abs{f}^p)(x))^{1/p}.
\label{cotlar}
\]
for each $T^l_*$ and for a large enough exponent $p$ (here $0<\delta\leq 1/2$). Roughly speaking, a choice of $p=p_l \sim l$ will work. In particular, $p_l\rightarrow \infty$ as $l\rightarrow\infty$. Using this inequality, we can conclude that $T^l_*$ is bounded (with norm $\lesssim_pBl^{-\alpha+1})$ for all $p\in [p_l,\infty)$. This idea of keeping track of the explicit dependence on the exponent $l$ and using the decay in $l$ to sum the pieces up has also been used in [\cite{grafakos2006maximal}, albeit for a different maximal operator than the one considered here. For $p\in (1, p_l)$, however, we rely on a weak (1,1) estimate for $T^l_*$ (which is not hard to obtain) and then an interpolation between $1$ and $p_l$, which causes us to gain a power of $l$. In other words, we are only able to retain a decay of $l^{-\alpha+2}$ (hence the assumption $\alpha>3$). This is a trade off of working with  a weaker logarithmic regularity assumption. 

Finally, in Section \ref{weakmax}, we establish a weak (1,1) estimate for the maximal operator $T^l_*$ and obtain $L^p$ bounds for the sum $\sum_{l\geq 0}T^l_*$, and consequently for $S_*$ (we use the decay in $l$ and the condition $\alpha>3$ here).

\subsection*{Acknowledgements} The author would like to thank her advisor Andreas Seeger for introducing this problem, for his guidance and several illuminating discussions. Research supported in part by NSF grant 1500162.

\section{The Error Terms}
\label{error}
Let $\Psi\in C^\infty$ with $\Psi=1$ on $\{1/4\leq |\xi|\leq 4\}$ and supported on $\{1/5\leq |\xi|\leq 5\}$. For $j\in \ZZ$, define $\Psi_j(\xi)=\Psi(2^{-j}\xi)$. Then
\begin{equation}
\sum_{j\in \ZZ}\klj= \sum_{j\in \ZZ}\klj*\mathcal{F}^{-1}(\Psi_j) + \sum_{j\in \ZZ}\klj *\mathcal{F}^{-1}(1-\Psi_j)
\label{twoterms}
\end{equation}
where $\klj$ is as defined in (\ref{kljdefn}).

In this section, we will show that the contribution from the second sum above can be made as small as required using the rapid decay of $\hat{\eta}$. To control this sum, we study the corresponding multiplier given by
\[
\sum_{j\in \ZZ}2^{-jd}(1-\Psi_j)(\xi)\hat{\eta_l}(2^{-j}\cdot)*m_j(\xi)=\sum_{j\in \ZZ} (1-\Psi_j(\xi))2^{-jd}\int m_j(\omega)\hat{\eta_l}(2^{-j}(\xi-\omega))\, d\omega .
\]
\begin{lem}
\label{errorterms}
Let $l\geq 0$. For any $N\in \NN$, multi-index $\gamma$ and $\xi\neq 0$, we have the estimate
\begin{equation}
\sum_{j\in \ZZ}2^{-jd}\abs{(1-\Psi_j)\partial^{\gamma}_\xi (\hat{\eta_l}(2^{-j}\cdot)*m_j)(\xi)} \lesssim_{\eta, N, \gamma,d} B2^{l(d+\abs{\gamma}-N)} \abs{\xi}^{-\abs{\gamma}}.  
\end{equation}
\end{lem}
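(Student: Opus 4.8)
The plan is to reduce everything to the pointwise decay of the derivatives of $\hat{\eta_l}$, carry out the $\xi$‑differentiation on the smooth factor only, and then sum a geometric series in $j$. Since assumption (\ref{assump}) gives no more than $\norm{m_j}_{L^\infty}\le\norm{K_j}_{L^1}\lesssim B$ together with $\mathrm{supp}\,m_j\subset\{|\omega|\sim 2^j\}$, I would never differentiate $m_j$: writing $(\hat{\eta_l}(2^{-j}\cdot)*m_j)(\xi)=\int\hat{\eta_l}(2^{-j}(\xi-\omega))\,m_j(\omega)\,d\omega$, every $\gamma$‑derivative falls on $\hat{\eta_l}$ and produces the factor $2^{-j|\gamma|}$ times $(\partial^\gamma\hat{\eta_l})(2^{-j}(\xi-\omega))$. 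The input about $\hat{\eta_l}$ is that, because $\eta_l\in C_c^\infty$ is supported in $\{|x|\le 2^l\}$ with $\norm{\partial^\beta\eta_l}_\infty\lesssim_\beta 2^{-l|\beta|}$, integration by parts yields for every $M\in\NN$
\[
\abs{\partial^\gamma\hat{\eta_l}(\zeta)}\lesssim_{\gamma,M,d}2^{l(d+|\gamma|)}\bigl(1+2^l|\zeta|\bigr)^{-M}.
\]

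The geometric point is that $(1-\Psi_j)(\xi)\neq 0$ forces $|2^{-j}\xi|\notin[1/4,4]$, so for $\omega\in\mathrm{supp}\,m_j$, where $|2^{-j}\omega|\sim 1$, one has $|2^{-j}(\xi-\omega)|\gtrsim\max(1,2^{-j}|\xi|)$ and hence $(1+2^l|2^{-j}(\xi-\omega)|)^{-M}\lesssim_M 2^{-lM}(1+2^{-j}|\xi|)^{-M}$. Inserting this into the convolution integral and estimating it by $\norm{m_j}_\infty\,|\mathrm{supp}\,m_j|\lesssim B2^{jd}$ times the supremum of the kernel, the $2^{-jd}$ prefactor cancels the $2^{jd}$ and one is left with the per‑$j$ estimate
\[
2^{-jd}\abs{(1-\Psi_j)\,\partial^\gamma_\xi(\hat{\eta_l}(2^{-j}\cdot)*m_j)(\xi)}\lesssim_{\gamma,M,d}B\,2^{-j|\gamma|}\,2^{l(d+|\gamma|-M)}\bigl(1+2^{-j}|\xi|\bigr)^{-M}.
\]
Summing over $j$, I would split at $2^j\sim|\xi|$: for $2^j\lesssim|\xi|$ the factor $(1+2^{-j}|\xi|)^{-M}\sim 2^{jM}|\xi|^{-M}$ turns $\sum 2^{j(M-|\gamma|)}|\xi|^{-M}2^{l(d+|\gamma|-M)}$ into a geometric series (choose $M>|\gamma|$) whose top term is $\sim B|\xi|^{-|\gamma|}2^{l(d+|\gamma|-M)}$; for $2^j\gtrsim|\xi|$ one just uses $(1+2^{-j}|\xi|)^{-M}\le 1$ and sums $\sum 2^{-j|\gamma|}\lesssim|\xi|^{-|\gamma|}$. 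Taking $M=N$ gives the asserted bound.

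The step I expect to be the real obstacle is exactly this final summation over $j$ in the regime $2^j\gtrsim|\xi|$: there the factor $(1+2^{-j}|\xi|)^{-M}$ provides no help and convergence must come solely from the $2^{-j|\gamma|}$ produced by the $\gamma$‑derivative, which is comfortable for $|\gamma|\ge 1$ but becomes delicate when $|\gamma|$ is small, so that low‑frequency regime is the one demanding real care. Everything else — the off‑support decay $(1+2^l|2^{-j}(\xi-\omega)|)^{-M}$ of $\hat{\eta_l}$, which overwhelms the $2^{l(d+|\gamma|)}$ growth of its derivatives and yields the negative power $2^{-l(N-d-|\gamma|)}$, and the separation $|2^{-j}(\xi-\omega)|\gtrsim\max(1,2^{-j}|\xi|)$ enforced by $1-\Psi_j$ — is routine bookkeeping once organised as above, along the lines of the error‑term analyses cited in the introduction.
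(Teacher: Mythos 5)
Your argument is essentially the paper's own: place the $\gamma$-derivative entirely on the translated, dilated copy of $\hat{\eta_l}$ (producing the factor $2^{-j|\gamma|}2^{l(d+|\gamma|)}$), use the separation $\abs{2^{-j}(\xi-\omega)}\gtrsim 1$ forced by $1-\Psi_j$ on $\mathrm{supp}\,m_j$ together with the Schwartz decay of $\hat{\eta_l}$ to extract $2^{-lN}$, cancel the $2^{-jd}$ prefactor against the measure of the shell $\{|\omega|\sim 2^j\}$, and sum a geometric series in $j$. The paper organizes the $j$-sum as two explicit cases $j>k+2$ and $j<k-2$ (with $2^k\sim|\xi|$); you instead record a single per-$j$ bound carrying $(1+2^{-j}|\xi|)^{-M}$ and split the sum at $2^j\sim|\xi|$, which is a mildly tidier bookkeeping of the same estimates and yields the identical conclusion. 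You have also put your finger on the genuinely delicate spot: in the regime $2^j\gtrsim|\xi|$ the only decay in $j$ is $2^{-j|\gamma|}$, so the sum converges only when $|\gamma|\geq 1$. It is worth noting that the paper's proof has exactly the same unaddressed step --- its ``First Term'' display asserts $\sum_{j>k+2}2^{-j|\gamma|}\lesssim 2^{-(k+2)|\gamma|}$ with no restriction on $\gamma$, even though Theorem \ref{errorthm} later invokes the lemma at $\gamma=0$ (and indeed for $m\equiv 1$, $l=0$, $\gamma=0$ the absolute sum $\sum_{j>k+2}\abs{\widehat{K^{0,j}}(\xi)}$ need not converge, since $\widehat{K^{0,j}}(\xi)\to \widehat{K^{0,j}}(0)$ which is generically nonzero). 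So you have reproduced the paper's argument with the same scope; you did not close the $\gamma=0$ case, but neither does the paper, and you flagged the issue explicitly where the paper silently steps over it.
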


\begin{proof}
Fix $\xi\neq 0$. Let $k\in \ZZ$ be such that $2^{k-1}\leq \abs{\xi}<2^{k+1}$.As $\Psi_k=1$ on $\{2^{k-2}\leq \abs{\xi}\leq 2^{k+2}\}$, we can split the sum under consideration into two parts
\begin{align*}
&\sum_{j\in \ZZ}2^{-jd}\abs{(1-\Psi_j)\partial^{\gamma}_\xi (\hat{\eta_l}(2^{-j}\cdot)*m_j)(\xi)} \leq \\
&\sum_{j> k+2}2^{-jd}\abs{\partial^{\gamma}_\xi (\hat{\eta_l}(2^{-j}\cdot)*m_j)(\xi)}
+\sum_{j<k-2}2^{-jd}\abs{\partial^{\gamma}_\xi (\hat{\eta_l}(2^{-j}\cdot)*m_j)(\xi)}.
\end{align*}

\subsection*{First Term:} We have
\begin{align*}
&\sum_{j>k+2}2^{-jd}\abs{\partial^{\gamma}_\xi (\hat{\eta_l}(2^{-j}\cdot)*m_j)(\xi)} 
\leq &\sum_{j>k+2}2^{-jd}\int_{2^{j-1}\leq \abs{\omega}\leq 2^{j+1}}\abs{ m_j(\omega)}\abs{\partial^\gamma_\xi\hat{\eta_l}(2^{-j}(\xi-\omega))}\, d\omega.\\
\end{align*}
Now we observe that for $2^{j-1}\leq \abs{\omega}\leq 2^{j+1}$, we have $\abs{\omega-\xi}\geq \abs{\omega}/2$ and $2^{-j}\abs{\omega}/2\sim 1$. Hence, the Schwartz decay of $\hat{\eta}$ yields 
\begin{align*}
&\sum_{j>k+2}2^{-jd}\abs{\partial^{\gamma}_\xi (\hat{\eta_l}(2^{-j}\cdot)*m_j)(\xi)} \\
&\lesssim_{\eta,N,\gamma}\sum_{j>k+2} 2^{-jd}\int_{2^{j-1}\leq \abs{\omega}\leq 2^{j+1}}\norm{ m}_{L^\infty}2^{l(d+\abs{\gamma})}2^{-j\abs{\gamma}}\Big(2^{l-j}\abs{\omega}/2\Big)^{-N}\, d\omega\\
&\lesssim_{d,\eta,N,\gamma}\sum_{j>k+2}2^{-jd}\norm{m}_{L^\infty}2^{l(d+\abs{\gamma}-N)}2^{-j\abs{\gamma}}\int_{2^{j-1}\leq \abs{\omega}\leq 2^{j+1}}\, d\omega\\
&\lesssim_{d,\eta,N,\gamma}\sum_{j>k+2}2^{d}\norm{m}_{L^\infty}2^{l(d+\abs{\gamma}-N)}2^{-j\abs{\gamma}}
\lesssim_{d,\eta,N,\gamma}B2^{l(d+\abs{\gamma}-N)}2^{-(k+2)\abs{\gamma}}
\lesssim_{d,\eta,N,\gamma}B2^{l(d+\abs{\gamma}-N)}\abs{\xi}^{-\abs{\gamma}}.
\end{align*}
\subsection*{Second Term:} In this case for $2^{j-1}\leq \abs{\omega}\leq 2^{j+1}$ we have that $\abs{\omega-\xi}\geq \abs{\xi}/2$ and $2^{-j}\abs{\xi}/2\geq 1$. Hence 
\begin{align*}
&\sum_{j<k-2}2^{-jd}\abs{\partial^{\gamma}_\xi (\hat{\eta_l}(2^{-j}\cdot)*m_j)(\xi)}\\ 
&\lesssim_{\eta,N,\gamma}\sum_{j<k-2}2^{-jd}\int_{2^{j-1}\leq \abs{\omega}\leq 2^{j+1}}\norm{ m}_{L^\infty}2^{l(d+\abs{\gamma})}2^{-j\abs{\gamma}}\Big(2^{l-j}\abs{\xi}/2\Big)^{-N}\, d\omega\\
&\lesssim_{\eta,N,\gamma}\sum_{j<k-2}2^{-jd}\norm{m}_{L^\infty}2^{l(d+\abs{\gamma}-N)}\int_{2^{j-1}\leq \abs{\omega}\leq 2^{j+1}}\abs{\xi}^{-\abs{\gamma}}\Big(2^{-j}\abs{\xi}/2\Big)^{-N+\abs{\gamma}}\, d\omega\\
&\lesssim_{\eta,N,\gamma,d}\norm{m}_{L^\infty}2^{l(d+\abs{\gamma}-N)}\abs{\xi}^{-\abs{\gamma}}\sum_{j<k-2}2^{-jd}2^{(k-1)d}
\lesssim_{\eta,N,\gamma,d}\norm{m}_{L^\infty}2^{l(d+\abs{\gamma}-N)}\abs{\xi}^{-\abs{\gamma}}.
\end{align*}
\end{proof}

Now a simple application of the Mikhlin-H\"ormander Multiplier theorem gives us
\begin{thm}
\label{errorthm}
Let $l\geq 0$. For any $N_0\in \NN$, $p\in (1,\infty)$ and Schwartz function $f$, we have
\begin{equation*}
    \norm{\sum_{j\in \ZZ}\klj *\mathcal{F}^{-1}(1-\Psi_j)*f}_{L^p}\lesssim_d \text{max }(p,(p-1)^{-1})C_{\eta,N_0,p,\Psi}B2^{-log_0}\norm{f}_{L^p}.
\end{equation*}
Furthermore, we also have \[\norm{\sum_{j\in \ZZ}\klj *\mathcal{F}^{-1}(1-\Psi_j)*f\in L^{1,\infty}}_{L^{1,\infty}}\lesssim_d C_{\eta,N_0,p,\Psi}B2^{-log_0}\norm{f}_{L^1}.\]
\end{thm}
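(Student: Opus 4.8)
The plan is to recognise the operator $f\mapsto\sum_{j\in\ZZ}\klj*\mathcal{F}^{-1}(1-\Psi_j)*f$ as a Fourier multiplier operator and to invoke the quantitative Mikhlin--H\"ormander multiplier theorem, feeding it the estimate of \Cref{errorterms}. Concretely, for Schwartz $f$ each summand is
\[
\klj*\mathcal{F}^{-1}(1-\Psi_j)*f=\mathcal{F}^{-1}\big[(1-\Psi_j)(\xi)\,2^{-jd}\big(\hat{\eta_l}(2^{-j}\cdot)*m_j\big)(\xi)\,\hat f(\xi)\big],
\]
so the full sum has symbol $M^l(\xi):=\sum_{j\in\ZZ}2^{-jd}(1-\Psi_j)(\xi)\big(\hat{\eta_l}(2^{-j}\cdot)*m_j\big)(\xi)$. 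Applying \Cref{errorterms} with $\gamma=0$ shows that this series converges absolutely and uniformly on compact subsets of $\RR^d\setminus\{0\}$, so $M^l$ is a well-defined smooth function there; since the same estimates bound the term-by-term derivatives locally uniformly, one may differentiate under the sum, and dominated convergence (using that $\hat f$ is Schwartz) gives $\sum_{j}\klj*\mathcal{F}^{-1}(1-\Psi_j)*f=\mathcal{F}^{-1}[M^l\hat f]$ for Schwartz $f$.

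Next I would turn \Cref{errorterms} into a Mikhlin--H\"ormander condition with a \emph{small} constant. The classical theorem only requires control of $\partial^\gamma M^l$ for $\abs{\gamma}\leq\lfloor d/2\rfloor+1$, so given $N_0\in\NN$ I choose $N:=N_0+d+\lfloor d/2\rfloor+1$ in the lemma. Since $l\geq 0$, for every such $\gamma$ we then obtain
\[
\abs{\partial^\gamma M^l(\xi)}\lesssim_{\eta,N,\gamma,d}B\,2^{l(d+\abs{\gamma}-N)}\abs{\xi}^{-\abs{\gamma}}\leq C_{\eta,N_0,d,\Psi}\,B\,2^{-lN_0}\abs{\xi}^{-\abs{\gamma}},
\]
so $M^l$ satisfies the Mikhlin--H\"ormander hypothesis with constant $A_l:=C_{\eta,N_0,d,\Psi}\,B\,2^{-lN_0}$ (the dependence on $\Psi$ entering only through the support of $1-\Psi$, which fixes the ranges of $j$ in the splitting).

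Finally I invoke the quantitative form of the Mikhlin--H\"ormander theorem (see \cite{grafakos2008classical}, \cite{H}): a symbol obeying the above with constant $A_l$ defines an operator bounded on $L^p$ for $1<p<\infty$ with norm $\lesssim_d\max(p,(p-1)^{-1})\,A_l$, and of weak type $(1,1)$ with norm $\lesssim_d A_l$; combined with the identification of the symbol above this yields precisely the two displayed bounds (with the understanding that ``$2^{-log_0}$'' in the statement reads $2^{-lN_0}$). I do not anticipate a substantive obstacle: the only point needing a little care is the justification that the infinite sum genuinely represents the multiplier operator on a dense class and that one may interchange summation, differentiation and Fourier inversion --- but all of this follows immediately from the absolute convergence supplied by \Cref{errorterms} (alternatively, truncate the sum to $\abs{j}\leq R$, apply the multiplier theorem with the bound $A_l$ uniform in $R$, and let $R\to\infty$). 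In this sense it is indeed ``a simple application'' of the multiplier theorem.
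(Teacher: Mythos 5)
Your overall strategy is the same as the paper's: identify the operator as the Fourier multiplier with symbol $M^l(\xi)=\sum_j 2^{-jd}(1-\Psi_j)(\xi)\,\bigl(\hat{\eta_l}(2^{-j}\cdot)*m_j\bigr)(\xi)$, verify a Mikhlin--H\"ormander condition with a small constant, and invoke the quantitative multiplier theorem. That part is fine.

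However, there is a genuine gap in the step where you claim that \Cref{errorterms} directly yields $\abs{\partial^\gamma M^l(\xi)}\lesssim B\,2^{l(d+\abs{\gamma}-N)}\abs{\xi}^{-\abs{\gamma}}$. The lemma estimates
\[
\sum_{j}2^{-jd}\,\bigl|(1-\Psi_j)(\xi)\,\partial^{\gamma}_\xi\bigl(\hat{\eta_l}(2^{-j}\cdot)*m_j\bigr)(\xi)\bigr|,
\]
in which the cutoff $1-\Psi_j$ is \emph{not} differentiated; but $\partial^\gamma M^l$ involves $\partial^\gamma\bigl[(1-\Psi_j)\cdot(\hat{\eta_l}(2^{-j}\cdot)*m_j)\bigr]$, and the Leibniz expansion produces additional terms $\partial^\beta(1-\Psi_j)\,\partial^{\gamma-\beta}(\hat{\eta_l}(2^{-j}\cdot)*m_j)$ with $\beta\neq 0$ that \Cref{errorterms} says nothing about. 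The paper addresses exactly this: after expanding by Leibniz, it observes that $\partial^\beta(1-\Psi_j)(\xi)$ is nonzero only for the $O(1)$ values of $j$ with $\abs{\xi}\sim 2^{j}$, where $\abs{\partial^\beta(1-\Psi_j)(\xi)}\lesssim_\Psi\abs{\xi}^{-\abs{\beta}}$; pairing this with the lemma's bound on $\partial^{\gamma-\beta}$ of the convolution piece (applied termwise) recovers the required $\abs{\xi}^{-\abs{\gamma}}$ decay with the $2^{-lN_0}$ gain. You should insert this Leibniz argument; without it the verification of the Mikhlin--H\"ormander hypothesis is incomplete. (Your remarks about passing to the limit over truncations and the choice $N=N_0+d+\lfloor d/2\rfloor+1$ are fine and consistent with what the paper does.)
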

\begin{proof}
We need to prove that
\[\abs{\sum_{\beta\leq \gamma}c_{\beta,\gamma}\sum_{j\in \ZZ}\partial^\beta(1-\Psi_j)(\xi)\partial^{\gamma-\beta}(2^{-jd}\hat{\eta_l}(2^{-j}\cdot)*m_j)(\xi)}\lesssim_d C_{\eta,N,p,\Psi}B2^{-log_0}\abs{\xi}^{-\abs{\gamma}}\]
where $\gamma$ is a multi-index with $\abs{\gamma}\leq d/2+1$ and $\xi\neq 0$. We observe that for all values of $j$ except for $j_1,j_2$ where $2^{j_1+2}\leq \abs{\xi}<5\cdot2^{j_1}$ or when $2^{j_2}/5\leq \abs{\xi}<2^{j_2-2}$, we can use lemma \ref{errorterms} directly (as then $\abs{(1-\Psi_j)(\xi)}$ is a constant). Further, for the two remaining cases, we observe that $\abs{\xi}\sim 2^{j_k}$ $(k=1,2)$. Hence we can bound the term $\abs{\partial^\beta(1-\Psi_{j_k})}$ by $C_{\Psi}\abs{\xi}^{-\abs{\beta}}$ and apply the previous lemma to the term $\abs{\partial^{\gamma-\beta}(2^{-jd}\hat{\eta_l}(2^{-j}\cdot)*m_j)(\xi)}$ to bound it above by $C_{\eta,N}B2^{l(d+\abs{\gamma}-\abs{\beta}-N_0-\abs{\gamma})}\abs{\xi}^{-(\abs{\gamma}-\abs{\beta})}$. The result then follows by summing up.
\end{proof}

As a consequence, we obtain the $L^2$ boundedness of $S^l$ (as defined in (\ref{op_l})), with polynomial decay in $l$. 
\begin{thm}
\label{l2bound}
For $f\in L^2$ and $l>0$, we have
\[\norm{S^l(f)}_{L^2}\lesssim Bl^{-\alpha}\norm{f}_{L^2}.\]
We also have 
\[\norm{S^0(f)}_{L^2}\lesssim B\norm{f}_{L^2}.\]
\end{thm}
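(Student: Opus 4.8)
The plan is to split $S^l$ into its major part and error part via \eqref{twoterms}, control the error part with \cref{errorthm}, and handle the major part by a nearly orthogonal (Cotlar--Stein) argument exploiting the finite frequency overlap of the pieces $\klj*\mathcal{F}^{-1}(\Psi_j)$.

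First I would write $S^l f = \sum_{j\in\ZZ}\klj*\mathcal{F}^{-1}(\Psi_j)*f + \sum_{j\in\ZZ}\klj*\mathcal{F}^{-1}(1-\Psi_j)*f$. The second sum is bounded on $L^2$ with norm $\lesssim_d C_{\eta,\Psi}B\,2^{-l N_0}$ by \cref{errorthm} (choosing, say, $N_0=1$), which for $l>0$ is certainly $\lesssim Bl^{-\alpha}$, and for $l=0$ it contributes $\lesssim B$; so it suffices to bound the first (major) sum. Call the summand $G_j f := \klj*\mathcal{F}^{-1}(\Psi_j)*f$, whose multiplier is $(2^{-jd}\hat{\eta_l}(2^{-j}\cdot)*m_j)\cdot\Psi_j$, supported (by the support of $\Psi_j$) in the annulus $\{2^{j}/5\leq|\xi|\leq 5\cdot 2^{j}\}$. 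These annuli have bounded overlap: $G_j^* G_{j'} = 0$ once $|j-j'|$ exceeds a fixed constant $c_0$ (depending only on the supports of $\Psi$). Hence $\big\|\sum_j G_j f\big\|_{L^2}^2 = \sum_{|j-j'|\leq c_0}\langle G_j f, G_{j'} f\rangle \lesssim \sum_j \|G_j f\|_{L^2}^2$, and by Plancherel and disjointness of the frequency supports of $\widehat{G_j f}$ (again up to the fixed overlap), $\sum_j \|G_j f\|_{L^2}^2 \lesssim \big(\sup_j \|G_j\|_{L^2\to L^2}^2\big)\|f\|_{L^2}^2$. So everything reduces to the uniform-in-$j$ bound $\|G_j\|_{L^2\to L^2}\lesssim Bl^{-\alpha}$.

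For that, I would control the multiplier of $G_j$ in $L^\infty$. By rescaling ($\xi\mapsto 2^j\xi$) the multiplier of $G_j$ becomes $\big(\hat{\eta_l}*a_j\big)(\xi)\,\Psi(\xi)$ where $a_j(\xi)=\eta(\xi)m(2^j\xi)$ is supported in $\{1/2\le|\xi|\le 2\}$. Now $\|\hat{\eta_l}*a_j\|_{L^\infty}\leq \|\hat{\eta_l}\|_{L^1}\|a_j\|_{L^\infty}$ is too crude — it loses the decay in $l$. Instead observe that $2^{-jd}\hat{\eta_l}(2^{-j}\cdot)*m_j$ is exactly the Fourier transform of $\klj(x) = \eta_l(2^j x)K_j(x)$, so the multiplier of $G_j$ is $\Psi_j\cdot\widehat{\klj}$ and thus $\|G_j\|_{L^2\to L^2}\leq \|\widehat{\klj}\|_{L^\infty}\leq \|\klj\|_{L^1}\lesssim B(\log(2+2^l))^{-\alpha}\lesssim Bl^{-\alpha}$ for $l\ge 1$, by \eqref{kljnorm}; for $l=0$ this is $\lesssim B$. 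Combining with the error estimate completes the proof for both $l>0$ and $l=0$.

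The main obstacle is making sure the decay factor $l^{-\alpha}$ is genuinely captured rather than washed out: the naive $\|G_j\|_{L^2\to L^2}\le 1$ bound (which ignores the localization $\eta_l(2^j\cdot)$) gives nothing, so one must route the estimate through the $L^1$ norm of the space-side piece $\klj$ via \eqref{kljnorm}, which is where the logarithmic regularity hypothesis \eqref{assump} enters. The remaining points — verifying the bounded overlap constant $c_0$ from the support of $\Psi$, and checking that the almost-orthogonality/Plancherel bookkeeping only costs an $O(1)$ factor — are routine.
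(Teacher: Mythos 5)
Your proposal is correct and follows essentially the same route as the paper: split $S^l$ via \eqref{twoterms}, dispose of the error sum with \cref{errorthm}, and control the main sum by exploiting the bounded-overlap frequency localization of $\klj*\mathcal{F}^{-1}(\Psi_j)$ together with the $L^1$ bound \eqref{kljnorm}. The paper phrases the orthogonality step via a Littlewood--Paley decomposition of $f$ and Young's inequality, while you phrase it as a Cotlar--Stein almost-orthogonality computation and bound the per-piece multiplier by $\|\widehat{\klj}\|_{L^\infty}\leq\|\klj\|_{L^1}$, but these are cosmetically different renderings of the same argument.
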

\begin{proof}
It is enough to prove the above for a Schwartz function $f$. Now
\[S^l(f)=\sum_{j\in \ZZ}\klj*f=\sum_{j\in \ZZ}\klj*\mathcal{F}^{-1}(\Psi_j)*f + \sum_{j\in \ZZ}\klj *\mathcal{F}^{-1}(1-\Psi_j)*f.\]
As we have already established the $L_2$ boundedness of the second term in Theorem \ref{errorthm} (with as good a decay in $l$ as required), we only need to prove the theorem for the first term. To this effect, let 
\[f=\sum_{k\in \ZZ}\Delta_k f\]
be a Littlewood-Paley decomposition of $f$. Then
\begin{align*}
\norm{\sum_{j\in \ZZ}\klj*\mathcal{F}^{-1}(\Psi_j)*f}_{L^2}
&\lesssim \norm{\sum_{j\in \ZZ}\sum_{k\in \ZZ}\klj*\mathcal{F}^{-1}(\Psi_j)*\Delta_k f}_{L^2}\\
&\lesssim \norm{\sum_{j\in \ZZ}\klj*\mathcal{F}^{-1}(\Psi_j)*(\Delta_{j-1}+\Delta_{j}+\Delta_{j+1}) f}_{L^2}
\end{align*}
where we have used the fact that the frequency support of $\klj*\mathcal{F}^{-1}(\Psi_j)$ is contained in $\{2^j/5\leq \abs{\xi}\leq 5.2^j\}$. From (\ref{kljnorm}), we also have that 
\[\norm{\klj*\mathcal{F}^{-1}(\Psi_j)}_{L^1}\leq \norm{\klj}_{L^1}\norm{\mathcal{F}^{-1}(\Psi_j)}_{L^1}\lesssim B (\log{(2+2^l)})^{-\alpha}\norm{\Psi}_{L^\infty}. \]
Hence
\begin{align*}
\norm{\sum_{j\in \ZZ}\klj*\mathcal{F}^{-1}(\Psi_j)*f}_{L^2}
&\lesssim B (\log{(2+2^l)})^{-\alpha} \norm{\sum_{j\in \ZZ}(\Delta_{j-1}+\Delta_{j}+\Delta_{j+1}) f}_{L^2}\\
&\lesssim B (\log{(2+2^l)})^{-\alpha}\norm{f}_{L^2}
\end{align*}
which proves the theorem.
\end{proof}

\section{Estimates for the Majorly Contributing Portion of the Kernel}
\label{majsec}
We now turn our attention to the first term in (\ref{twoterms}), which is the one with the main contribution. The main advantage we have now is that the $jth$ term in
\[\sum_{j\in \ZZ}\klj*\mathcal{F}^{-1}(\Psi_j)\]
has frequency supported in the annulus $\{2^j/5\leq \abs{\xi}\leq 5\cdot2^j\}$. Hence, we can use Bernstein's inequality to get bounds on the $L_p$ norm (of each term, with $1<p<\infty$) and $L^{\infty}$ norm (of the derivative). 
\begin{prop}\label{new}
Let $q\in (1,\infty)$, and let $q'$ denote the Hölder conjugate exponent of $q$. Then for all $l\in \NN\cup \{0\}$ and $j\in \ZZ$, we have

\begin{enumerate}
    \item $\norm{\klj*\mathcal{F}^{-1}(\Psi_j)}_{L^{q'}}\lesssim B 2^{\frac{jd}{q}}(\log{(2+2^l)})^{-\alpha}$.
    \item For $x\in \RR^d$, $|\nabla (\klj*\mathcal{F}^{-1}(\Psi_j))(x)|\lesssim  2^{j}|\klj*\mathcal{F}^{-1}(\Psi_j)(x)|$.
\end{enumerate}

\end{prop}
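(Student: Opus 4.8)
The plan is to derive everything from one structural observation: with $g_j:=\klj*\mathcal{F}^{-1}(\Psi_j)$ we have $\widehat{g_j}=\widehat{\klj}\,\Psi_j$, so $g_j$ has Fourier transform supported in the annulus $\{2^j/5\le|\xi|\le 5\cdot2^j\}\subset B(0,5\cdot2^j)$; this transform is smooth (as $\klj$ is compactly supported, $\widehat{\klj}$ is smooth) and compactly supported (as $\Psi_j$ is), so $g_j\in\mathcal{S}(\RR^d)$ and all manipulations below are legitimate. Both estimates will be scale-uniform in $j$ and, apart from (\ref{kljnorm}), will not see $l$.

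For part (1) I would first bound $\|g_j\|_{L^1}$ and then upgrade the integrability by Bernstein. Young's inequality gives $\|g_j\|_{L^1}\le\|\klj\|_{L^1}\,\|\mathcal{F}^{-1}(\Psi_j)\|_{L^1}$; here $\|\mathcal{F}^{-1}(\Psi_j)\|_{L^1}=\|\mathcal{F}^{-1}\Psi\|_{L^1}$ is a constant depending only on $\Psi$ (the $L^1$ norm of an inverse Fourier transform is dilation invariant), while $\|\klj\|_{L^1}\lesssim B(\log(2+2^l))^{-\alpha}$ by (\ref{kljnorm}); hence $\|g_j\|_{L^1}\lesssim B(\log(2+2^l))^{-\alpha}$. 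Now Bernstein's inequality in the form ``$\widehat h$ supported in $B(0,R)$, $1\le q'\le\infty$, implies $\|h\|_{L^{q'}}\lesssim R^{d(1-1/q')}\|h\|_{L^1}$'', applied with $R\sim2^j$ and $d(1-1/q')=d/q$, yields
\[
\|g_j\|_{L^{q'}}\lesssim 2^{jd/q}\,\|g_j\|_{L^1}\lesssim B\,2^{jd/q}(\log(2+2^l))^{-\alpha},
\]
which is (1).

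For part (2) I would exploit a reproducing identity at the frequency scale $2^j$. Fix $\Upsilon\in C_c^\infty(\RR^d)$ with $\Upsilon\equiv1$ on $\{1/5\le|\xi|\le5\}$ and $\mathrm{supp}\,\Upsilon\subset\{1/6\le|\xi|\le6\}$, and put $\Upsilon_j:=\Upsilon(2^{-j}\cdot)$. Since $\Upsilon_j\equiv1$ on $\mathrm{supp}\,\widehat{g_j}$ we have $g_j=g_j*\Phi_j$ with $\Phi_j:=\mathcal{F}^{-1}(\Upsilon_j)=2^{jd}\Phi(2^j\cdot)$ and $\Phi:=\mathcal{F}^{-1}\Upsilon\in\mathcal{S}(\RR^d)$. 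Differentiating the $\Phi_j$ factor, $\nabla g_j=g_j*\nabla\Phi_j$ with $\nabla\Phi_j(y)=2^{j(d+1)}(\nabla\Phi)(2^jy)$, so that, for every $M$,
\[
|\nabla g_j(x)|\le 2^j\int_{\RR^d}2^{jd}\,|g_j(x-y)|\,|(\nabla\Phi)(2^jy)|\,dy\lesssim_M 2^j\int_{\RR^d}\frac{2^{jd}}{(1+2^j|y|)^{M}}\,|g_j(x-y)|\,dy.
\]
The remaining and most delicate step is to dominate this $2^{-j}$-scale Schwartz average of $|g_j|$ centered at $x$ by $|g_j(x)|$ itself. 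The mechanism is that $g_j$, being band-limited to frequencies $\lesssim2^j$, varies slowly on balls of radius $c\,2^{-j}$; quantitatively one would want to feed back the identity $g_j(x-y)-g_j(x)=\int\big(g_j(x-y-z)-g_j(x-z)\big)\Phi_j(z)\,dz$ together with Bernstein's $\|\nabla g_j\|_{L^\infty}\lesssim 2^j\|g_j\|_{L^\infty}$ and absorb the resulting tail. Equivalently, after the rescaling $g_j(x)=2^{jd}G(2^jx)$ (with $G=\alj*\mathcal{F}^{-1}\Psi$ Fourier-supported in the fixed annulus $\{1/5\le|\cdot|\le5\}$) the claim becomes the $j$-independent inequality $|\nabla G|\lesssim|G|$, and this is where I would concentrate the argument.

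I expect the genuine obstacle to be exactly this last step of part (2): turning the frequency localization of $g_j$ into a \emph{pointwise} — rather than merely $L^p$ — comparison of $\nabla g_j$ with $g_j$ is subtle in a way that the naive convolution estimate above does not settle, since the average on its right-hand side is a priori only controlled by $\sup|g_j|$ near a zero of $g_j$. Everything else — the $L^1$ bound via (\ref{kljnorm}) and Young, the $L^1\to L^{q'}$ Bernstein inequality for (1), and the reproducing identity and differentiation for (2) — is routine.
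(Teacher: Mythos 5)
Your part (1) is correct and is essentially the paper's argument in a slightly rearranged order: the paper applies Young in the form $\norm{\klj*\mathcal{F}^{-1}(\Psi_j)}_{L^{q'}}\le\norm{\klj}_{L^1}\norm{\mathcal{F}^{-1}(\Psi_j)}_{L^{q'}}$ and then Hausdorff--Young, $\norm{\mathcal{F}^{-1}(\Psi_j)}_{L^{q'}}\lesssim\norm{\Psi_j}_{L^q}\lesssim_\Psi 2^{jd/q}$, while you first pass to $\norm{\cdot}_{L^1}$ via Young and (\ref{kljnorm}) and then upgrade by the $L^1\to L^{q'}$ Bernstein inequality; both give the stated bound with constants uniform in $q$.

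For part (2), the step you flagged as the genuine obstacle is indeed not fillable, but the right conclusion is not that your argument is incomplete; it is that the literal pointwise inequality is false for general band-limited functions, and it is not what the paper proves or uses. For instance, a real-valued, continuous, integrable function whose Fourier transform is supported in an annulus has $\int g=\hat g(0)=0$, hence changes sign; near a sign-changing zero of finite order of such a (real-analytic) function the ratio $\abs{\nabla g}/\abs{g}$ blows up, so no bound $\abs{\nabla g(x)}\lesssim 2^j\abs{g(x)}$ can hold at every $x$ (and the specific kernels $\maj$, e.g.\ for $m\equiv1$, are of exactly this type). The paper's own proof of (2) is a bare citation of Bernstein's inequality (Wolff, Proposition 5.3), which is the norm statement $\norm{\nabla(\maj)}_{L^p}\lesssim 2^j\norm{\maj}_{L^p}$, and the norm version is all that is needed downstream: the proof of Theorem \ref{weak thm} only uses $\norm{\nabla(\maj)}_{L^1}\lesssim 2^j\norm{\maj}_{L^1}$, and in Lemma \ref{mainlemma}(iii) one may apply H\"older directly to the gradient factor and then use Bernstein in $L^{q'}$ together with part (1), obtaining the same bound. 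Your reproducing identity $g_j=g_j*\Phi_j$, hence $\nabla g_j=g_j*\nabla\Phi_j$ with $\norm{\nabla\Phi_j}_{L^1}\lesssim 2^j$, is precisely the standard proof of that norm inequality (one application of Young), so your write-up already contains everything that is actually required; the correct fix is to state and use (2) as an $L^p$ inequality (for all $1\le p\le\infty$) rather than pointwise, not to try to absorb the Schwartz-tail average into $\abs{g_j(x)}$.
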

\begin{proof}
For the first part, we have
\begin{align*}
&\norm{\klj*\mathcal{F}^{-1}(\Psi_j)}_{L^{q'}}\\
&\leq \norm{\klj}_{L^1}\norm{\mathcal{F}^{-1}(\Psi_j)}_{L^{q'}}
\lesssim B(\log{(2+2^l)})^{-\alpha}\norm{\Psi_j}_{L^q}
\lesssim_{\Psi} B(\log{(2+2^l)})^{-\alpha}2^{jd/q}
\end{align*}
where in the second step we have used (\ref{kljnorm}) and Hausdorff-Young's inequality.

For the second part, we recall that the Fourier transform of $\nabla (\klj*\mathcal{F}^{-1})(\Psi_j)$ is supported on a dyadic annulus of radius $2^j$. The assertion then follows from Bernstein's inequality (see [\cite{wolff2003lectures}, Proposition 5.3). 
\end{proof}

\section{Weak (1,1) Boundedness of \texorpdfstring{$S^l$}{}}
\label{weakSl}
Let $\mathds{1}_l$ be the characteristic function of the set $\{x: 2^{l-1}\leq \abs{x}\leq 2^{l+1}\}$ for $l>0$ and of the set $\{x: \abs{x}\leq 1\}$ for $l=0$. We will denote $\mathds{1}_l(2^jx)$ by $\clj(x)$. Then (\ref{twoterms}) can be rewritten as
\begin{equation}
\label{twotermswithchar}
\sum_{j\in \ZZ}\klj= \sum_{j\in \ZZ}\klj\clj=\sum_{j\in \ZZ}\klj*\mathcal{F}^{-1}(\Psi_j)\clj + \sum_{j\in \ZZ}\klj *\mathcal{F}^{-1}(1-\Psi_j)\clj.
\end{equation}
The advantage of (\ref{twotermswithchar}) over (\ref{twoterms}) is that it preserves information about the compact support of the kernel $\klj$, a property we will exploit quite often in the forthcoming proofs. Now for $f\in L^1(\RR^d)$, we can estimate
\[
\norm{\sum_{j\in \ZZ}\klj*f}_{L^{1,\infty}}
\leq  \norm{\sum_{j\in \ZZ}\tlj*f}_{L^{1,\infty}} + \norm{\sum_{j\in \ZZ}(\klj *\mathcal{F}^{-1}(1-\Psi_j)\clj)*f}_{L^{1,\infty}}
\]
where we define $\tlj:=(\maj)\clj$. Also let the operator $T^l$ be defined as $T^lf:= \sum_{j\in \ZZ}\tlj*f$.

By Theorem \ref{errorthm}, we conclude that $\norm{\sum_{j\in \ZZ}(\klj *\mathcal{F}^{-1}(1-\Psi_j)\clj)*f}_{L^{1,\infty}}\lesssim Bl^{-\alpha+1}\norm{f}_{L^1}$. 
Hence, in order to prove that $S^l$ is of weak type $(1,1)$ (with the respective norm $\lesssim_d Bl^{-\alpha+1}$), it is enough to prove the same for $T^l$, which is the content of the next theorem.  The proof we give here essentially uses the same ideas as Hörmander's original proof of the (Hörmander-)Mikhlin Multiplier Theorem (see [\cite{H}, also [\cite{grafakos2008classical}).

\begin{thm}\label{weak thm}
For all $f\in L^1(\RR^d)$, we have
\[\norm{T^lf}_{L^{1,\infty}}\lesssim Bl^{-\alpha+1}\norm{f}_{L^1}\]
for $l\geq 1$ and
\[\norm{T^0f}_{L^{1,\infty}}\lesssim B\norm{f}_{L^1}\]
\end{thm}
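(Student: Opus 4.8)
The plan is to run the classical Calderón–Zygmund argument for the weak $(1,1)$ bound on the operator $T^l$, being careful to track the explicit dependence on $l$ through the $L^1$ bound $\|\klj\|_{L^1}\lesssim Bl^{-\alpha}$ from \eqref{kljnorm} (with the $l=0$ case giving the bound $\lesssim B$). Given $f\in L^1$ and a height $\lambda>0$, perform a Calderón–Zygmund decomposition of $f$ at height $\lambda$: write $f=g+b$ with $\|g\|_{L^\infty}\lesssim\lambda$, $\|g\|_{L^1}\lesssim\|f\|_{L^1}$, and $b=\sum_Q b_Q$ where the $b_Q$ are supported on disjoint dyadic cubes $Q$ with mean zero, $\int|b_Q|\lesssim\lambda|Q|$, and $\sum_Q|Q|\lesssim\lambda^{-1}\|f\|_{L^1}$. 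For the good part, I would use the $L^2$ bound on $T^l$: since $T^l$ differs from $S^l$ only by the error term handled in Theorem~\ref{errorthm} (which is $L^2$-bounded with norm $\lesssim Bl^{-\alpha+1}$), and $S^l$ is $L^2$-bounded with norm $\lesssim Bl^{-\alpha}$ by Theorem~\ref{l2bound}, we get $\|T^lg\|_{L^2}^2\lesssim (Bl^{-\alpha+1})^2\|g\|_{L^2}^2\lesssim (Bl^{-\alpha+1})^2\lambda\|f\|_{L^1}$, so Chebyshev gives $|\{|T^lg|>\lambda/2\}|\lesssim (Bl^{-\alpha+1})^2\lambda^{-1}\|f\|_{L^1}$; rescaling $\lambda$ by the operator norm (or simply absorbing $Bl^{-\alpha+1}$) yields the desired weak bound for the good part.

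For the bad part, the standard reduction shows it suffices to bound $\int_{\RR^d\setminus Q^*}|T^l b_Q(x)|\,dx\lesssim Bl^{-\alpha+1}\int|b_Q|$, where $Q^*$ is a fixed dilate of $Q$, and then sum over $Q$ and use $\sum_Q\int|b_Q|\lesssim\|f\|_{L^1}$ together with $\sum_Q|Q^*|\lesssim\lambda^{-1}\|f\|_{L^1}$ for the exceptional set. Writing $y_Q$ for the center of $Q$ and using $\int b_Q=0$, we have
\[
T^l b_Q(x)=\sum_{j\in\ZZ}\int \big(\tlj(x-y)-\tlj(x-y_Q)\big)b_Q(y)\,dy,
\]
so the key estimate needed is the Hörmander-type condition
\[
\int_{|x-y_Q|\geq c\,\mathrm{diam}(Q)}\Big|\sum_{j\in\ZZ}\big(\tlj(x-y)-\tlj(x-y_Q)\big)\Big|\,dx\lesssim Bl^{-\alpha+1}
\]
uniformly for $y$ in $Q$. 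This is where the work lies: I would split the sum over $j$ according to whether $2^j$ is small or large compared to $1/\mathrm{diam}(Q)$ and $1/|x-y_Q|$. For the terms where $2^j\,\mathrm{diam}(Q)\lesssim 1$, use the gradient estimate from Proposition~\ref{new}(2), namely $|\nabla(\maj)|\lesssim 2^j|\maj|$, to get $|\tlj(x-y)-\tlj(x-y_Q)|\lesssim 2^j\,\mathrm{diam}(Q)\,\sup_{z}|\tlj(x-z)|$ (over $z$ on the segment), and then integrate in $x$; the compact support of $\klj$ encoded by $\clj$ localizes $\tlj$ to $|x|\sim 2^{l-j}$, and the $L^1$ normalization $\|\tlj\|_{L^1}\lesssim Bl^{-\alpha}$ together with summing the geometric factor $2^j\,\mathrm{diam}(Q)$ over the relevant range of $j$ (roughly $j$ with $2^{j}\,\mathrm{diam}(Q)\le 1$) produces $Bl^{-\alpha}$ times a bounded sum; the extra factor of $l$ that degrades the estimate to $l^{-\alpha+1}$ comes precisely from the number ($\sim l$) of dyadic scales in $x$ over which the supports of the $\tlj$ must be summed, or equivalently from counting the $j$'s for which the annulus $|x|\sim 2^{l-j}$ meets a fixed ball. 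For the terms where $2^j\,\mathrm{diam}(Q)\gtrsim 1$, drop the difference and estimate each term by $\int_{|x-y_Q|\ge c\,\mathrm{diam}(Q)}(|\tlj(x-y)|+|\tlj(x-y_Q)|)\,dx\lesssim\|\tlj\|_{L^1}\lesssim Bl^{-\alpha}$, but now only the $j$ with $2^{l-j}\gtrsim\mathrm{diam}(Q)$ — i.e.\ again $\sim l$ values of $j$, using that $\clj$ forces $|x|\sim 2^{l-j}$ — contribute, giving another $Bl^{-\alpha+1}$.

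The main obstacle is organizing this double dyadic bookkeeping (scales $2^j$ in frequency versus scales $2^{l-j}$ in the spatial support of $\tlj$, against the scale $\mathrm{diam}(Q)$ of the cube) so that each regime contributes at most $O(l)$ nonzero terms, each of size $Bl^{-\alpha}$, and verifying that no regime produces a worse loss than the claimed single power of $l$. The $L^\infty$ gradient bound of Proposition~\ref{new}(2) is exactly the smoothness input that replaces the usual $|\nabla K|\lesssim|x|^{-d-1}$ pointwise kernel estimate, and its compatibility with the support localization from $\clj$ is what makes the Hörmander condition go through with the stated $l$-dependence. For $l=0$ the same argument applies with only $O(1)$ scales, yielding the clean bound $\lesssim B\|f\|_{L^1}$.
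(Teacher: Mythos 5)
Your proposal is correct and follows essentially the same line as the paper: a Calder\'on--Zygmund decomposition rescaled to absorb the operator norm, the $L^2$ bound from Theorem~\ref{l2bound} for the good part, and a H\"ormander integral condition on $\sum_j\tlj$ for the bad part, proved by splitting the scales $j$ against $\mathrm{diam}(Q)$ and using the gradient estimate of Proposition~\ref{new}(2) in the low-frequency range together with the annular support of $\clj$ in the high-frequency range. One small correction to your bookkeeping: the low-frequency geometric sum already yields $O(Bl^{-\alpha})$ with no further loss, and the extra power of $l$ giving $l^{-\alpha+1}$ comes entirely from the high-frequency range, where the support constraint from $\clj$ restricts the sum to $O(l)$ values of $j$, each contributing $O(Bl^{-\alpha})$ via $\norm{\tlj}_{L^1}$.
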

\begin{proof}
We prove the result for $l>0$. The result for $l=0$ follows similarly. Also, for this proof, we can assume that $B=1$. Let $f\in L^1(\RR^d)$ and fix $\sigma >0$. Let
\[
f= f_0+f_1
\]
be the standard Calder\'on-Zygmund decomposition of $f$ at the level $l^{\alpha-1}\sigma$. More precisely, let $\{I_k\}_{k\in \NN}$ be axis-parallel cubes with centres $\{a_k\}_{k\in \NN}$ respectively such that
\begin{align*}
&l^{\alpha-1}\sigma<\abs{I_k}^{-1}\int_{I_k}\abs{f(y)}\, dy\leq 2^dl^{\alpha-1}\sigma,\\
&\abs{f(x)}\leq l^{\alpha-1}\sigma \mathrm{ \,a.e.\, for\, } x\not\in \underset{k\in \NN}{\bigcup}I_k,
\end{align*}

\[
f_0(x)=
\begin{cases}
f(x)-\abs{I_k}^{-1}\int_{I_k}f(y)\, dy \: x\in I_k,\\
0,\,\textrm{otherwise.} \\
\end{cases}
f_1(x)=
\begin{cases}
\abs{I_k}^{-1}\int_{I_k}f(y)\, dy \: x\in I_k,\\
f(x),\,\textrm{otherwise.} 
\end{cases}
\]
Now
\begin{align*}
&\textrm{meas}(\{x\colon \abs{\sum_{j\in \ZZ}\tlj*f(x)}>\sigma \})\\ 
\leq &\textrm{meas}(\{x\colon \abs{\sum_{j\in \ZZ}\tlj*f_0(x)}>\sigma/2 \})+\textrm{meas}(\{x\colon \abs{\sum_{j\in \ZZ}\tlj*f_1(x)}>\sigma/2 \})
\end{align*}
We estimate $\textrm{meas}(\{x\colon \abs{\sum_{j\in \ZZ}\tlj*f_0(x)}>\sigma/2 \})$
\begin{align*}
&\leq \textrm{meas}(\{x\colon \abs{\sum_{j\in \ZZ}\tlj*f_0(x)}>\sigma/2 \}\bigcap (\underset{k\in \NN}{\bigcup}2I_k)^c)+ \textrm{meas}(\underset{k\in \NN}{\bigcup}2I_k)\\
&\lesssim_d \textrm{meas}(\{x\colon \abs{\sum_{j\in \ZZ}\tlj*f_0(x)}>\sigma/2 \}\bigcap (\underset{k\in \NN}{\bigcup}2I_k)^c)+\frac{l^{-\alpha+1}\norm{f}_1}{\sigma}.
\tag{a}
\label{a}
\end{align*}
Now since the mean value of $f_0$ over $I_k$ vanishes, we have
\begin{align*}
&\int_{(\underset{k\in \NN}{\bigcup}2I_k)^c}\abs{\sum_{j\in \ZZ}\tlj*f_0(x)}\, dx\\
&\leq  \sum_{k\in \NN}\int_{I_k}\Bigg(\int_{(\underset{k\in \NN}{\bigcup}2I_k)^c}\abs{\sum_{j\in \ZZ}\tlj(x-y)-\tlj(x-a_k)}\,dx\Bigg )\abs{f_0(y)}\, dy\\
&\lesssim_d l^{-\alpha+1}\sum_{k\in \NN}\int_{I_k}\abs{f_0(y)\, dy}
\lesssim_d l^{-\alpha+1}\norm{f}_1,
\tag{b}
\label{b}
\end{align*}
provided we prove that 
\[
\int_{(\underset{k\in \NN}{\bigcup}2I_k)^c}\abs{\sum_{j\in \ZZ }\tlj(x-y)-\tlj(x-a_k)}\,dx\lesssim_d l^{-\alpha+1},\: y\in I_k.
\tag{c}
\label{c}
\]
We postpone the proof of (\ref{c}) in order to conclude the estimates. By (\ref{a}), (\ref{b}) and (\ref{c}), we obtain
\[
\textrm{meas}(\{x\colon \abs{\sum_{j\in \ZZ}\tlj*f_0(x)}>\sigma/2 \})\lesssim_d \sigma^{-1}l^{-\alpha+1}\norm{f}_1 +\sigma^{-1}l^{-\alpha+1}\norm{f}_1\lesssim_d \sigma^{-1}l^{-\alpha+1}\norm{f}_1.
\tag{d}
\label{d}
\]
Set $p=2l+4$. Then by Theorem \ref{l2bound}, we have $\norm{S^lf}_2\lesssim_d l^{\alpha-1}\norm{f}_{L^2},$ and hence
\[
\sigma^2\,\textrm{meas}(\{x\colon \abs{\sum_{j\in \ZZ}\tlj*f_1(x)}>\sigma/2 \})
\leq \norm{\sum_{j\in \ZZ}\tlj*f_1}_2^2\lesssim l^{2(-\alpha+1)}\norm{f_1}_2^2
\]
\begin{align*}
&=l^{2(-\alpha+1)}\Bigg (\sum_{k\in \NN}\abs{I_k}^{-1}\Big\lvert\int_{I_k}f(x)\,dx\Big \rvert^2+\int_{(\underset{k\in \NN}{\bigcup}I_k)^c}\abs{f(x)}^2\, dx\Bigg ) \\
&\lesssim_d l^{-\alpha+1}\sigma\Big \{ \sum_{k\in \NN}\Big\lvert\int_{I_k}f(x)\,dx\Big \rvert+\int_{(\underset{k\in \NN}{\bigcup}I_k)^c}\abs{f(x)}\, dx\Big \}
\end{align*}
which enables us to conclude
\[
\textrm{meas}(\{x\colon \abs{\sum_{j\in \ZZ}\tlj*f_1(x)}>\sigma/2 \})\lesssim \frac{l^{-\alpha+1}\norm{f}_1}{\sigma}.
\tag{e}
\label{e}
\]
Combining the estimates (\ref{d}) and (\ref{e}) yields
\[
\textrm{meas}(\{x\colon \abs{\sum_{j\in \ZZ}\tlj*f(x)}>\sigma \})\lesssim_d \frac{l^{-\alpha+1}\norm{f}_1}{\sigma}.
\]
There remains the proof of (\ref{c}). It is sufficient to prove that
\[
\sum_{j\in \ZZ}\int_{\abs{x}\geq 2t}\abs{\tlj(x-y)-\tlj(x)}\, dx\lesssim l^{-\alpha+1}\; (\abs{y}\leq t, t>0).
\]
Fix $t>0$. Let $m\in \ZZ$ such that $\abs{t}\sim 2^m$.
Now for $j> -m$,
\begin{align*}
\int_{\abs{x}\geq 2t}\abs{\tlj(x-y)-\tlj(x)}\, dx
&\leq 2\int_{\abs{x}\geq 2t}\abs{\tlj(x)}\, dx
\leq\int_{\abs{x}\geq 2^m}\abs{\klj*\mathcal{F}^{-1}(\Psi_j)(x)}\clj(x)\, dx.
\end{align*}
Now we observe that for the last term to be non-zero, $2^{l-j}\geq 2^m$. Hence we have \begin{align*}
&\sum_{j>-m}\int_{\abs{x}\geq 2t}\abs{\tlj(x-y)-\tlj(x)}\, dx\\
&\leq \sum_{-m< j\leq l-m} \int_{\abs{x}\geq 2^m}\abs{\klj*\mathcal{F}^{-1}(\Psi_j)(x)}\clj(x)\, dx 
\leq l \norm{\klj}_{L^1}\norm{\mathcal{F}^{-1}(\Psi_j)}_{L^1}\lesssim_{\Psi} l(\log{(2+2^l)})^{-\alpha}\\
&\lesssim l^{-\alpha+1}
\end{align*}
where we have used our assumption (\ref{assump}) to conclude the second to last inequality.
Moreover, by Proposition \ref{new}, we have 
$
\abs{\nabla (\klj*\mathcal{F}^{-1}(\Psi_j))(x)}\lesssim 2^{j} \abs{ (\klj*\mathcal{F}^{-1}(\Psi_j))(x)}
$
which yields

\begin{align*}
\sum_{j\leq -m }\int_{\abs{x}\geq t}\abs{\tlj(x-y)-\tlj(x)}\, dx
&\leq \sum_{j\leq -m }\int_0^1\int_{\RR^n}\abs{\langle y,\nabla (\klj*\mathcal{F}^{-1}(\Psi_j))(x-\tau y) \rangle }\, dx\,d\tau \\
&\lesssim\sum_{j\leq -m } t \norm{\nabla (\klj*\mathcal{F}^{-1}(\Psi_j))}_1\\
&\lesssim\sum_{j\leq -m } t2^j \norm{\klj*\mathcal{F}^{-1}(\Psi_j)}_1
\lesssim_{\Psi}\sum_{j\leq -m }  \norm{\klj}t2^j
\leq l^{-\alpha}.
\end{align*}
The last two estimates give (\ref{c}), and the proof is complete.
\end{proof}

The following theorem now follows almost immediately using standard $L^p-$interpolation theory.
\begin{thm}
\label{lpforSl}
$T^l$ defines a bounded operator on $L^p$ for $p\in (1,\infty)$, with 
\[\norm{T^l}_{L^p}\lesssim_d \text{max }(p,(p-1)^{-1}Bl^{-\alpha+1}\]
for $l>0$. We also have
\[\norm{T^0}_{L^p}\lesssim_d \text{max }(p,(p-1)^{-1}B.\]
\end{thm}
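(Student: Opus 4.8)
The plan is to run the standard interpolation--duality scheme for singular integral operators, tracking the constants carefully enough to obtain the stated dependence $\max(p,(p-1)^{-1})$ on the exponent; this explicit dependence is precisely what Section~\ref{largesec} will require. Three ingredients are available. First, by Theorem~\ref{weak thm}, $T^l$ is of weak type $(1,1)$ with norm $\lesssim_d Bl^{-\alpha+1}$ (and $\lesssim_d B$ when $l=0$). Second, $T^l$ is bounded on $L^2$ with the same bound: by~(\ref{twotermswithchar}), $S^lf=T^lf+\sum_{j\in\ZZ}\big(\klj*\mathcal{F}^{-1}(1-\Psi_j)\clj\big)*f$, where $\norm{S^lf}_{L^2}\lesssim Bl^{-\alpha}\norm{f}_{L^2}$ by Theorem~\ref{l2bound} and the remaining sum is $O(B2^{-lN_0})$ on $L^2$ by Theorem~\ref{errorthm}. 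Third, the adjoint $(T^l)^*$ is convolution with the kernel $x\mapsto\sum_{j\in\ZZ}\overline{\tlj(-x)}$; since $\Psi$, $\eta_l$ and $\clj$ are real and even, this is exactly the operator of the same form associated with the multiplier $\overline{m}$, and because $\phi$ and $\log(2+\abs{\cdot})$ are real and even, $\overline{m}$ satisfies~(\ref{assump}) with the identical constant $B$. Hence all the estimates of Sections~\ref{error}--\ref{weakSl} apply verbatim to $\overline{m}$; in particular $(T^l)^*$ is also of weak type $(1,1)$ with norm $\lesssim_d Bl^{-\alpha+1}$.

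Next I would treat three overlapping ranges of $p$. On $1<p\le 3/2$, Marcinkiewicz interpolation between the weak-$(1,1)$ and strong-$(2,2)$ bounds for $T^l$ gives $\norm{T^l}_{L^p\to L^p}\lesssim_d(p-1)^{-1}Bl^{-\alpha+1}$, with the interpolation constant staying bounded on this range and degrading only like $(p-1)^{-1}$ as $p\to 1^+$. Running the same argument with $(T^l)^*$ in place of $T^l$ and then dualizing yields $\norm{T^l}_{L^p\to L^p}\lesssim_d(p-1)Bl^{-\alpha+1}\lesssim_d p\,Bl^{-\alpha+1}$ on $3\le p<\infty$. Finally, on the intermediate range $3/2\le p\le 3$ (which contains $p=2$), Riesz--Thorin interpolation between the two strong-type bounds $\norm{T^l}_{L^{3/2}\to L^{3/2}}\lesssim_d Bl^{-\alpha+1}$ and $\norm{T^l}_{L^3\to L^3}\lesssim_d Bl^{-\alpha+1}$ obtained above gives $\norm{T^l}_{L^p\to L^p}\lesssim_d Bl^{-\alpha+1}$ with no endpoint loss. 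Since $\max(p,(p-1)^{-1})$ is comparable to $1$ on compact subintervals of $(1,\infty)$ and reproduces the growth $(p-1)^{-1}$ as $p\to 1^+$ and $p$ as $p\to\infty$ seen in the first two ranges, the three estimates combine to $\norm{T^l}_{L^p\to L^p}\lesssim_d\max(p,(p-1)^{-1})Bl^{-\alpha+1}$ for all $1<p<\infty$; the case $l=0$ is identical with $l^{-\alpha+1}$ replaced by $1$.

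There is no genuine obstacle, but two points deserve a moment's care: the identification of $(T^l)^*$ with the operator attached to $\overline{m}$, which supplies the dual-endpoint weak-$(1,1)$ bound for free and with the same constant $B$; and the decision to control a neighbourhood of $p=2$ by Riesz--Thorin between two well-behaved strong-type exponents rather than by a single Marcinkiewicz step, which would introduce a spurious $(2-p)^{-1}$ blow-up. Retaining the explicit shape $\max(p,(p-1)^{-1})$ of the $p$-dependence is the only feature that matters downstream, where $p=p_l\sim l$ is chosen.
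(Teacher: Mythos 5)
Your proposal follows the same overall route as the paper—interpolate between the weak-$(1,1)$ bound of Theorem~\ref{weak thm} and the $L^2$ bound, then dualize for $p>2$—but you fill in three points that the paper's two-line proof glosses over. First, Theorem~\ref{l2bound} controls $S^l$, not $T^l$; your step of peeling off the error operator $\sum_j(\klj*\mathcal F^{-1}(1-\Psi_j)\clj)*f$ via Theorem~\ref{errorthm} to recover the $L^2$ bound for $T^l$ is genuinely needed. Second, you verify that $(T^l)^*$ is again an operator of the same form attached to $\overline m$, which satisfies~(\ref{assump}) with the same constant $B$ because $\phi$, $\eta_l$, $\Psi$ and the logarithmic weight are all real and even; the paper simply invokes ``duality'' without this check, and without it the dual endpoint weak-$(1,1)$ bound is not free. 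Third, your decision to treat the middle range $3/2\le p\le 3$ by Riesz--Thorin between two strong-type exponents, rather than by a single Marcinkiewicz step from $1$ to $2$, is the right way to avoid the spurious $(2-p)^{-1}$ factor that Marcinkiewicz introduces near the upper endpoint; a naive interpolation on $(1,2)$ would not yield the stated $\max(p,(p-1)^{-1})$ dependence, which is bounded near $p=2$. So while the strategy is the same as the paper's, your version is more careful and each extra step is warranted; the $p$-dependence you retain is exactly what Section~\ref{largesec} requires when choosing $p=p_l\sim l$.
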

\begin{proof}
The operator $T^l$ is bounded on $L^2$ (by Theorem \ref{l2bound}) and maps $L^1$ to $L^{(1,\infty)}$ (by Theorem \ref{weak thm}), with norm $Bl^{-\alpha+1}$ (norm $B$ for $l=0$). Interpolating between the $L^1$ and $L^2$ spaces then yields the required norm for the operator acting on $L^p$ with $p\in (1,2)$. Further, a duality argument yields the desired result for $p\in (2,\infty)$ as well. This concludes the proof of the theorem.
\end{proof}

\section{Boundedness on \texorpdfstring{$L^p$}{} for large \texorpdfstring{$p$}{}}
\label{largesec}
In this section, we investigate the properties of the truncated operator $\tlog f:= \sum_{j\leq n} \tlj*f$ (for $n\in \ZZ$ and $l\geq 0$), with an aim of establishing $L^p\rightarrow L^p$ bounds for the associated maximal operator $T^l_*f=\text{sup }\,|\tlog f|$. Let $M[f]$ denote the standard Hardy-Littlewood Maximal function. 
 
We wish to show that for $l\geq 0$, the operator $T^l_*$ is bounded on $L^p$ for $1<p<\infty$ with the corresponding operator norm $\lesssim_p Bl^{-\alpha+1}$ ($\lesssim_p B$ for $l=0$). We will achieve this by proving a Cotlar type inequality 
\[
T^l_* f(x)\lesssim_d \frac{1}{(1-\delta)^{1/q}}(M(\abs{T^lf}^q)(x))^{1/q}+2^\frac{ld}{q}l^{-\alpha+1}(1+\delta^{-1/q})B(m)(M(\abs{f}^q)(x))^{1/q}.
\]
for each $T^l_*$ and for a large enough exponent $q$ (here $0<\delta\leq 1/2$). Roughly speaking, a choice of $q=q_l \sim l$ will work. In particular, $q_l\rightarrow \infty$ as $l\rightarrow\infty$. Using this inequality, we can conclude that $T^l_*$ is bounded (with norm $\lesssim_pBl^{-\alpha+1})$ for all $p\in (p_l,\infty)$. 

The following lemma is the main step in establishing the Cotlar type inequality. The ideas used are similar to Lemma A.1 in [\cite{guo2019maximal}.
\begin{lem}\label{mainlemma}
Fix $\Tilde{x}\in \RR^d$, $n\in \ZZ$ and $q>1$. Let $g(y)=f(y)\mathds{1}_{B(\Tilde{x},2^{-n})}(y)$ and $h=f-g$. Then we have
\begin{enumerate}[label=(\roman*)]
    \item \label{l1} 
    $|\tlog g(\x)| \lesssim
                      \begin{cases}
                      B(\mq{f}(\x))^{1/q}\,&l=0.\\
                      0\, & l>0.
                      \end{cases}$
    \item \label{l2} $ 
                    |\tlog h(\x)-T^l h(\x)| \lesssim
                    \begin{cases}
                       0\, & l=0.\\
                       B2^{ld/q}l^{-\alpha+1}(\mq{f}(\x))^{1/q}\,&l>0.
                     \end{cases}$
                      
    \item \label{l3} For $\abs{w-\x}\leq 2^{-n-1}$, we have\\ 
    $|T^lh(\x)-T^lh(w)| \lesssim
    \begin{cases}
    B(\mq{f}(\x))^{1/q}\,&l=0.\\
    B2^{ld/q}l^{-\alpha+1}(\mq{f}(\x))^{1/q}\,&l>0.
    \end{cases}$
\end{enumerate}
\end{lem}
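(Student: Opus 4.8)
The plan is to estimate each of the three quantities by exploiting the compact support of $\tlj$: since $\tlj = (K^{l,j}*\mathcal{F}^{-1}(\Psi_j))\mathds{1}^{l,j}$, the kernel $\tlj$ is supported where $|x|\sim 2^{l-j}$ (or $|x|\lesssim 2^{-j}$ when $l=0$), and its $L^1$ norm is $\lesssim Bl^{-\alpha}$ by (\ref{kljnorm}), so that $T^l$ has kernel norm $\lesssim Bl^{-\alpha+1}$ after summing the $O(l)$ overlapping scales at any fixed point. For \ref{l1}, I would observe that $g$ is supported in $B(\x,2^{-n})$, so $\tlog g(\x) = \sum_{j\leq n}\tlj*g(\x)$; for $l>0$ the integrand requires $|\x - y|\sim 2^{l-j}\geq 2^{l-n}\geq 2^{-n}$, which is incompatible with $y$ lying in $B(\x,2^{-n})$ once $l>0$ — hence the term vanishes identically. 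For $l=0$, $\tlj*g(\x)$ is supported (in $y$) in a ball of radius $\sim 2^{-j}\leq 2^{-n}$ around $\x$; I bound $|\tlj*g(\x)|\leq \|\tlj\|_{L^{q'}}\|g\|_{L^q(B(\x,C2^{-j}))}$ by Hölder, use Proposition \ref{new}(1) to get $\|\tlj\|_{L^{q'}}\lesssim B2^{jd/q}$, and note $\|g\|_{L^q(B(\x,C2^{-j}))}\lesssim 2^{-jd/q}(M[f^q](\x))^{1/q}$, so the $2^{jd/q}$ factors cancel; then sum a geometric series in $j\leq n$ (only finitely many nonzero, or a convergent tail) to get the bound $B(M[f^q](\x))^{1/q}$.

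For \ref{l2}, when $l=0$ the difference $\tlog h(\x) - T^0h(\x) = -\sum_{j>n}\tau^{0,j}*h(\x)$; each such kernel is supported in $|\x-y|\lesssim 2^{-j}<2^{-n}$, forcing $y\in B(\x,2^{-n})$, where $h$ vanishes, so the difference is $0$. When $l>0$, $\tlog h(\x) - T^lh(\x) = -\sum_{j>n}\tlj*h(\x)$, and now the support condition $|\x-y|\sim 2^{l-j}$ together with $j>n$ gives $|\x-y|\lesssim 2^{l-n-1}$; I again bound each term by Hölder as in \ref{l1}, getting $|\tlj*h(\x)|\lesssim B2^{jd/q}(\log(2+2^l))^{-\alpha}\cdot 2^{-jd/q}\bigl(\sup_{r\lesssim 2^{l-j}}|B(\x,r)|^{-1}\int_{B(\x,r)}|f|^q\bigr)^{1/q} \lesssim Bl^{-\alpha}(M[f^q](\x))^{1/q}$, and the number of relevant $j$ is $O(l)$ (those with $2^{l-j}\geq 2^{-n}$, i.e. $n<j\leq n+l$), contributing the extra power of $l$. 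The factor $2^{ld/q}$ is the slack one must allow so that after dividing by $|B(\x,r)|$ one can replace the localized average by the global maximal function; I would keep careful track of how the $2^{ld/q}$ enters when $|\x-y|$ ranges over an annulus of outer radius $\sim 2^{l-j}$ rather than $\sim 2^{-j}$.

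For \ref{l3}, fix $w$ with $|w-\x|\leq 2^{-n-1}$. Write $T^lh(\x) - T^lh(w) = \sum_{j\in\ZZ}\int\bigl(\tlj(\x-y) - \tlj(w-y)\bigr)h(y)\,dy$; since $h$ vanishes on $B(\x,2^{-n})$ and the integrand is supported where $\x-y$ or $w-y$ lies in the support annulus of $\tlj$ (radius $\sim 2^{l-j}$, or $\lesssim 2^{-j}$ if $l=0$), only $j$ with $2^{l-j}\gtrsim 2^{-n}$ (respectively $2^{-j}\gtrsim 2^{-n}$) contribute — that is $j\leq n+l$ (resp. $j\leq n$). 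For $j\leq -\log_2 t$-type small scales I would use the gradient estimate: $|\tlj(\x-y)-\tlj(w-y)|\leq |w-\x|\,\|\nabla\tlj\|_{L^\infty}$ along the segment, but it is cleaner to integrate as in the proof of Theorem \ref{weak thm}, bounding $\int|\tlj(\x-y)-\tlj(w-y)|\,dy \leq |w-\x|\,\|\nabla(K^{l,j}*\mathcal{F}^{-1}(\Psi_j))\|_{L^1}\lesssim |w-\x|2^j\|K^{l,j}\|_{L^1}\lesssim 2^{j-n}Bl^{-\alpha}$ using Proposition \ref{new}(2); summing over $j\leq n$ (where $2^{j-n}\leq 1$ is a geometric series) gives $Bl^{-\alpha}$, and for $l=0$ that is the whole estimate. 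For $l>0$ there remain the scales $n<j\leq n+l$: here $2^{j-n}$ can be as large as $2^l$, so the gradient bound is wasteful; instead I bound those terms by the triangle inequality $|\tlj(\x-y)| + |\tlj(w-y)|$ and reduce, exactly as in \ref{l2}, to $\sum_{n<j\leq n+l}$ of Hölder-plus-maximal-function estimates, each $\lesssim Bl^{-\alpha}2^{ld/q}$-weighted... — yielding $B2^{ld/q}l^{-\alpha+1}(M[f^q](\x))^{1/q}$ after the $O(l)$ sum.

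The main obstacle I anticipate is the bookkeeping in \ref{l2} and \ref{l3} for $l>0$: one must handle kernels supported on a fat annulus of radius $\sim 2^{l-j}$ and convert a localized $L^q$ average over that annulus into the Hardy–Littlewood maximal function $M[f^q](\x)$, and it is precisely this step that forces the dilation loss $2^{ld/q}$ (the ratio of the annulus volume to what the Hölder estimate naturally produces) and the loss of one power of $l$ from summing the $O(l)$ scales $n<j\leq n+l$. Getting the exponent of $l$ to be exactly $-\alpha+1$ (and not worse) requires using $\|K^{l,j}\|_{L^1}\lesssim B(\log(2+2^l))^{-\alpha}\sim Bl^{-\alpha}$ rather than a cruder bound, and being careful that the number of overlapping scales is $O(l)$, not $O(l\log l)$ or larger.
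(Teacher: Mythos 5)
Your treatment of \ref{l1}, \ref{l2}, and the $j>n$ part of \ref{l3} is essentially the paper's argument: use the support of $\clj$ to identify which scales can contribute, apply H\"older with the $L^{q'}$ norm of $\maj$ from Proposition~\ref{new}(1), convert the local $L^q$ average over the relevant ball/annulus into $M[f^q](\x)$, and sum the $O(l)$ overlapping scales. (In \ref{l2} you write the normalizing factor as $2^{-jd/q}$ rather than $2^{(l-j)d/q}$, which is where the $2^{ld/q}$ actually comes from; you flag this yourself, so it is a bookkeeping slip rather than a conceptual issue.)

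The real gap is in the $j\leq n$ terms of \ref{l3}. You bound $\int\abs{\tlj(\x-y)-\tlj(w-y)}\,dy\lesssim 2^{j-n}Bl^{-\alpha}$ and sum to get $Bl^{-\alpha}$, asserting that "for $l=0$ that is the whole estimate." But this is an $L^1$ (H\"ormander-type) bound on the kernel difference; it does not by itself control $\abs{T^lh(\x)-T^lh(w)}$, because $h$ is not bounded and the claimed estimate is a \emph{pointwise} bound by $(\mq{f}(\x))^{1/q}$, not an $L^1$ statement. To convert the kernel regularity into a maximal-function bound one must, after applying the mean value theorem, retain the pointwise gradient estimate $\abs{\nabla(\maj)(x)}\lesssim 2^j\abs{\maj(x)}$ from Proposition~\ref{new}(2) \emph{inside} the $y$-integral, pick up the factor $2^j\abs{\x-w}\lesssim 2^{j-n}$, and then run the same H\"older-plus-maximal-function argument used in \ref{l1} and \ref{l2}: bound the remaining integral by $\norm{\maj}_{L^{q'}}\bigl(\int_{\abs{\x-y}\sim 2^{l-j}}\abs{h}^q\bigr)^{1/q}$, normalize by the annulus volume, and sum the geometric series $\sum_{j\leq n}2^{j-n}$. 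The $L^1$ kernel bound is the right ingredient for Theorem~\ref{weak thm}, where one integrates the output in $x$, but it is the wrong tool for the pointwise Cotlar-type estimate here.
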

\begin{proof}
We may assume without loss of generality that $B=1$.
To prove \ref{l1}, we consider for $j\leq n$
\begin{align*}
    \abs{\tlj*g(\x)}
    &\leq \int_{\abs{\x-y}\leq 2^{-n}}\abs{\tlj(\x-y)g(y)}\,dy\\
    &\leq \norm{\maj(\x-y)}_{L_{q'}}\Bigg(\int_{\abs{\x-y}\leq 2^{-n}}\abs{g(y)}^q|\clj(\x-y)|^q\,dy\Bigg )^{1/q}.
\end{align*}
We note that $\clj(\x-y)$ is supported around $\x$ in either a dyadic annulus of radius $\sim 2^{-j+l}$ for $l>0$ or a disc of radius $2^{-j}$ for $l=0$. As $j\leq n$, the second term above is non-zero only when $l=0$. In this case, we estimate
\begin{align*}
    &\abs{T^{0,j}*g(\x)}\\ 
    &\lesssim 2^{jd/q}(\log{2})^{-\alpha} 2^{-nd/q}\Bigg(2^{nd}\int_{\abs{\x-y}\leq 2^{-n}}\abs{g(y)}^q\,dy\Bigg )^{1/q} \mathrm{(using\, Proposition\, \ref{new})}\\
    &\lesssim  2^{(j-n)d/q} (\mq{g}(\x))^{1/q}.
\end{align*}
Summing up in $j<n$, the assertion now follows as $\abs{g}\leq \abs{f}$.

For \ref{l2}, we observe that $\abs{\tlog h(\x)-T^l(\x)}\leq \sum_{j>n}\abs{\tlj*h(\x)}$. 
For $j>n$, we then have 
\begin{align*}
    \abs{\tlj*h(\x)}
    &\leq \int_{\abs{\x-y}\geq 2^{-n}}\abs{\tlj(\x-y)h(y)}\,dy\\
    &\leq \norm{\maj(\x-y)}_{L_{q'}}\Bigg(\int_{\abs{\x-y}\geq 2^{-n}}\abs{h(y)}^q|\clj((\x-y))|^q\,dy\Bigg )^{1/q}.
\end{align*}
Again, using the support property of $\clj$, we observe that the second term above is non-zero only when $l>0$ and $j<l+n$. For each $j\in (n,l+n)$, we estimate
\begin{align*}
&\abs{\tlj*h(\x)}\\
    &\leq 2^{jd/q}(\log{(2+2^l)})^{-\alpha}\Bigg(\int_{\abs{\x-y}\geq 2^{-n}}\abs{h(y)}^q|\clj(2^j(\x-y))|^q\,dy\Bigg )^{1/q} \mathrm{(using\, Proposition\, \ref{new})}\\
    &\lesssim 2^{jd/q} l^{-\alpha}\Bigg(\int_{\abs{\x-y}\sim 2^{-l+j}}\abs{h(y)}^q\,dy\Bigg )^{1/q} 
    \leq 2^{jd/q} l^{-\alpha}2^{(-j+l)d/q}\Bigg(2^{(-j+l)d}\int_{\abs{\x-y}\leq 2^{-l+j}}\abs{h(y)}^q\,dy\Bigg )^{1/q} \\
    &\leq l^{-\alpha}2^{ld/q}(\mq{h}(\x))^{1/q}.
\end{align*}
Summing up in $n<j<n+l$ and noting that $\abs{h}\leq \abs{f}$, we get
\[
\sum_{j>n}\abs{\tlj*h(\x)}\lesssim 2^{ld/q}l^{-\alpha+1}(\mq{f}(\x))^{1/q}.
\]
Now for \ref{l3}, we consider the terms $\tlj*h(\x)-\tlj*h(w)$ separately for $j\leq n$ and $j>n$. The sum $\sum_{j>n}\abs{\tlj*h(\x)}$ was already dealt with in \ref{l2} and like before, only matters for $l>0$. Since $\abs{w-\x}\leq 2^{-n-1}$ we have $\abs{w-y}\approx \abs{\x-y}$ for $\abs{\x-y}\geq 2^{-n}$ and for non-zero $l$, the previous calculation leads to
\[
\sum_{j>n}\abs{\tlj*h(w)}\lesssim
                    \begin{cases}
                       0\, & l=0.\\
                       B2^{ld/q}l^{-\alpha+1}(\mq{f}(\x))^{1/q}\,&l>0.
                     \end{cases}
\]
It remains to consider the terms for $j\leq n$. We write
\[
\tlj*h(\x)-\tlj*h(w)= \int_0^1\int_{\abs{\x-y}\geq 2^{-n}}\Big \langle \x-w, \nabla (\maj)(w+s(\x-w)-y)\Big \rangle h(y)dy ds.
\]
Since $\abs{w-\x}\leq 2^{n-1}$ we can replace $\abs{w+s(\x-w)-y}$ in the integrand with $\abs{\x-y}$. Also Proposition \ref{new} yields 
\[
\abs{\nabla (\maj)(\x-y)}\leq 2^j \abs{(\maj)(\x-y)}.
\]
Thus we have
\begin{align*}
&\abs{\tlj*h(\x)-\tlj*h(w)}\\
&\lesssim 2^j\abs{\x-w}\int_{\abs{\x-y}\geq 2^{-n}} \abs{(\maj)(\x-y)h(y)}dy\\
&\leq 2^{j-n-1}\norm{(\maj)(\x-y)}_{L_{q'}}\Bigg(\int_{\abs{\x-y}\geq 2^{-n}}\abs{h(y)}^q|\clj((\x-y))|^q\,dy\Bigg )^{1/q}.
\end{align*}
Proposition \ref{new} now gives us
\begin{align*}
\abs{\tlj*h(\x)-\tlj*h(w)}
&\lesssim  2^{j-n-1}2^{jd/q}(\log{(2+2^l)})^{-\alpha}\Bigg(\int_{\abs{\x-y}\sim 2^{-j+l}}\abs{h(y)}^q\,dy\Bigg )^{1/q} \\
&\lesssim  2^{j-n-1}2^{jd/q}(\log{(2+2^l)})^{-\alpha}2^{(-j+l)d/q}(\mq{h}(\x))^{1/q} \\
&\lesssim \begin{cases}
    2^{ld/q} 2^{j-n-1}(\mq{f}(\x))^{1/q}\, & l=0.\\
    2^{ld/q} 2^{j-n-1}l^{-\alpha}(\mq{f}(\x))^{1/q}\,&l>0.
    \end{cases}
\end{align*}
Summing in $j\leq n$ leads to
\[
\sum_{j\leq n}\abs{\tlj*h(\x)-\tlj*h(w)}\lesssim 
\begin{cases}
    2^{ld/q} (\mq{f}(\x))^{1/q}\, & l=0.\\
    2^{ld/q} l^{-\alpha}(\mq{f}(\x))^{1/q}\,&l>0.
\end{cases}
\]
\end{proof}

We can now prove the main result of this section. 
\begin{prop}\label{cotlarlemma}
Let $\alpha>3$, $q>1$ and $B(m)$ be as in (\ref{assump}). Let $f$ be a Schwartz function. Then for almost every $x$ and for $0<\delta \leq 1/2$, we have
\[
T^l_* f(x)\lesssim \frac{1}{(1-\delta)^{1/q}}(M(\abs{T^lf}^q)(x))^{1/q}+C_dA_l(1+\delta^{-1/q})(M(\abs{f}^q)(x))^{1/q}
\]
where
\[
A_l=
\begin{cases}
                      B\,&l=0\\
                      B2^{ld/q} l^{-\alpha+1}\, & l>0.
                      \end{cases}
\]
\end{prop}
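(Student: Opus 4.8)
The plan is to derive the Cotlar-type inequality from Lemma \ref{mainlemma} by the standard localization argument. Fix $x$ (which we rename $\x$) and $n\in\ZZ$. With $g=f\mathds{1}_{B(\x,2^{-n})}$ and $h=f-g$ as in the lemma, write $\tlog f(\x)=\tlog g(\x)+\tlog h(\x)$. Part \ref{l1} of the lemma controls $\tlog g(\x)$ by $A_l(M[f^q](\x))^{1/q}$, and part \ref{l2} lets us replace $\tlog h(\x)$ by $T^lh(\x)$ up to the same kind of error. So the whole matter reduces to estimating $|T^lh(\x)|$, and the point of part \ref{l3} is that on the ball $B(\x,2^{-n-1})$ the function $T^lh$ is nearly constant, equal to $T^lh(\x)$ up to an $A_l(M[f^q](\x))^{1/q}$ error. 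The idea now is: for $w$ in that ball, $T^lh(w)=T^lf(w)-T^lg(w)$, and we average $|T^lf(w)|$ over the ball.

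The key computation is then: for any $w\in B(\x,2^{-n-1})$,
\[
|T^lh(\x)|\le |T^lh(w)|+C\,A_l(M[f^q](\x))^{1/q}\le |T^lf(w)|+|T^lg(w)|+C\,A_l(M[f^q](\x))^{1/q}.
\]
Raising to the $q$-th power and averaging $w$ over $B(\x,2^{-n-1})$ (whose measure is a fixed multiple of $2^{-nd}$), the first term gives $\lesssim M[|T^lf|^q](\x)$. For the term $|T^lg(w)|$ one uses that $T^l$ is weak $(1,1)$ with norm $\lesssim A_l$ (Theorem \ref{weak thm}) together with the support of $g$: by Kolmogorov's inequality (or directly, distribution-function integration up to a truncation level), $\int_{B(\x,2^{-n-1})}|T^lg|^q\,dw \lesssim_q \delta^{-1}\,2^{-nd(1-q)}\|g\|_1^q + \delta^{\,?}\cdots$ — more precisely the standard trick: split at height $\lambda$, use the weak-$(1,1)$ bound below $\lambda$ and choose $\lambda$ proportional to $\delta^{-1/q}2^{nd}\|g\|_1$-scaled so the averaged $L^q$ norm of $T^lg$ is $\lesssim (1-\delta)^{-1/q}$ times nothing plus $\delta^{-1/q}A_l (M[|f|^q](\x))^{1/q}$; here $2^{nd}\|g\|_1 = 2^{nd}\int_{B(\x,2^{-n})}|f|\lesssim M[|f|](\x)\le (M[|f|^q](\x))^{1/q}$. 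Collecting the three contributions and then taking the supremum over $n\in\ZZ$ (the right side is independent of $n$) yields the claimed inequality.

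The one genuinely delicate point — and the reason the statement carries the factor $(1-\delta)^{-1/q}$ rather than a plain constant — is the bookkeeping in bounding the local $L^q$ average of $T^lg$. Since $T^l$ is only weak $(1,1)$, passing to an $L^q$ average costs a distribution-function argument: for $\lambda>0$,
\[
\fint_{B(\x,2^{-n-1})}|T^lg(w)|^q\,dw
= \frac{q}{|B|}\int_0^\infty \lambda^{q-1}\,\mathrm{meas}\{w\in B:|T^lg(w)|>\lambda\}\,d\lambda,
\]
and one splits the $\lambda$-integral at the threshold where the trivial bound $\mathrm{meas}\le|B|$ and the weak-$(1,1)$ bound $\mathrm{meas}\le CA_l\|g\|_1/\lambda$ cross. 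Optimizing gives $\fint_{B}|T^lg|^q \lesssim_q (A_l)^q\big(2^{nd}\|g\|_1\big)^q$ up to the $(1-\delta)$/$\delta$ constants that come from artificially inserting the parameter $\delta$ exactly as in Lemma A.1 of \cite{guo2019maximal}; one keeps a fraction $(1-\delta)$ of the mass paired with the $M[|T^lf|^q]$ term and a fraction $\delta$ paired with the $g$-term, which is where $(1-\delta)^{-1/q}$ and $\delta^{-1/q}$ enter. This is the step I expect to take the most care; everything else is a direct assembly of the three parts of Lemma \ref{mainlemma} together with $2^{nd}\int_{B(\x,2^{-n})}|f|\le C_d\,M[|f|](\x)\le C_d(M[|f|^q](\x))^{1/q}$ and the trivial monotonicity $M[|g|^q]\le M[|f|^q]$.

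Finally, since the whole bound holds for every $n$ with a right-hand side not depending on $n$, taking $\sup_{n}$ gives the bound for $T^l_*f(\x)$; and since Schwartz functions are dense and both sides are finite a.e., the inequality holds for a.e.\ $x$. I would also remark that the two pieces $A_l$ for $l=0$ and $l>0$ are handled uniformly, the only difference being which of the cases in Lemma \ref{mainlemma} is active, so no separate argument is needed.
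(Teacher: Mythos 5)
Your reduction to Lemma~\ref{mainlemma} and the localization into $g$ and $h$ match the paper's. The gap is in the step where you raise the pointwise inequality $|T^lh(\x)|\le |T^lf(w)|+|T^lg(w)|+CA_l(\mq{f}(\x))^{1/q}$ to the $q$-th power and average $w$ over the \emph{whole} ball $B(\x,2^{-n-1})$. This produces the term $\frac{1}{|B|}\int_B|T^lg(w)|^q\,dw$, and the weak $(1,1)$ bound for $T^l$ does not control it when $q>1$: writing
\[
\frac{1}{|B|}\int_B|T^lg|^q\,dw=\frac{q}{|B|}\int_0^\infty\lambda^{q-1}\,\textrm{meas}\{w\in B:|T^lg(w)|>\lambda\}\,d\lambda,
\]
the weak $(1,1)$ estimate gives $\textrm{meas}\le C A_l\|g\|_1/\lambda$ on the tail, so the integrand is $\sim\lambda^{q-2}$ there, and $\int^\infty\lambda^{q-2}\,d\lambda$ diverges for every $q\ge 1$. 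Kolmogorov's inequality turns a weak $(1,1)$ bound into local $L^r$ averages only for $r<1$. The truncation you gesture at does not repair this: truncating the $\lambda$-integral is not a bookkeeping step, it amounts to discarding the set of $w$ on which $|T^lg(w)|$ exceeds the truncation level, and once that set is discarded you are no longer averaging over the full ball.

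The paper avoids the problematic average altogether. It introduces the exceptional set
\[
\Omega_n(\x,\delta)=\{w\in B(\x,2^{-n-1}):\ |T^lg(w)|>2^{d/q}\delta^{-1/q}A_l(\mq{f}(\x))^{1/q}\},
\]
and uses the weak $(1,1)$ bound together with your (correct) observation $2^{nd}\|g\|_1\lesssim(\mq{f}(\x))^{1/q}$ to show $\textrm{meas}(\Omega_n)\le\delta\,\textrm{meas}(B(\x,2^{-n-1}))$. On $B\setminus\Omega_n$, which occupies at least a $(1-\delta)$ fraction of the ball, $|T^lg(w)|$ is bounded pointwise by the threshold, so it suffices to control $\inf_{w\in B\setminus\Omega_n}|T^lf(w)|$; the elementary chain
\[
\inf_{E}|F|\le\Big(\frac{1}{|E|}\int_E|F|^q\Big)^{1/q}\le(1-\delta)^{-1/q}\Big(\frac{1}{|B|}\int_B|F|^q\Big)^{1/q}
\]
then produces the first term of the Cotlar inequality. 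This infimum-over-a-large-subset device, rather than a full-ball $L^q$ average of the decomposed pieces, is what makes the weak $(1,1)$ information usable and what generates the $(1-\delta)^{-1/q}$ and $\delta^{-1/q}$ factors; it is the one missing ingredient in your sketch. The rest of your outline --- invoking all three parts of Lemma~\ref{mainlemma}, the monotonicity of the maximal function, and the $n$-independence of the right side --- is in order.
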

\begin{proof}
The proof is essentially the same as that of an analogous result in [\cite{guo2019maximal}, which is in turn a modification of the argument for the standard Cotlar inequality regarding the truncation of singular integrals (see [\cite{stein1993harmonic}, sec 1.7).

Fix $\x\in \RR^d$ and $n\in \ZZ$ and define $g$, $h$ and $q$ as in the previous lemma. For $w$ (to be chosen later) with $\abs{w-\x}\leq 2^{-n-1}$ we can write
\begin{align}
\tlog f(\x)
&= \tlog g(\x)+(\tlog-T^l)h(\x)+T^lh(\x)\nonumber\\
&= \tlog g(\x)+(\tlog -T^l)h(\x)+T^lh(\x)-T^lh(w)+T^lf(w)-T^lg(w). 
\label{terms}
\end{align}
By Lemma \ref{mainlemma}, we have
\[
\abs{\tlog g(\x)}+\abs{(\tlog-T^l)h(\x)}+\abs{T^lh(\x)-T^lh(w)}\lesssim A_l(\mq{f}(\x))^{1/q}.
\]
All that remains in this case is to consider the term $T^lf(w)-T^lg(w)$ for $w$ in a substantial subset of $B(\x,2^{-n-1})$.
By Theorem \ref{weak thm} we have that for all $f\in L^q(\RR^d)$ and all $\lambda>0$
\[
\textrm{meas}\{x\colon \abs{T^lf(x)}>\lambda\}\leq A_l^q\lambda^{-q}\norm{f}_q^q.
\]
Now let $\delta\in (0,1/2)$ and consider the set 
\[
\Omega_n(\x,\delta)=\{w\colon \abs{w-\x}<2^{-n-1}, \, \abs{T^lg(w)}>2^{d/q}\delta^{-1/q}A_l(\mq{f}(\x))^{1/q}\}.
\]
In (\ref{terms}) we can estimate the term $\abs{T^lg(w)}$ by $A_l2^{d/q}\delta^{-1/q}(\mq{f}(\x))^{1/q}$ whenever $w\in B(\x, 2^{-n-1})\setminus \Omega_n(\x,\delta)$. Hence we obtain
\begin{equation}
\abs{\tlog f(\x)}\lesssim \underset{w\in B(\x, 2^{-n-1}\setminus \Omega_n(\x,\delta)}{\textrm{inf}} \abs{T^lf(w)}+C_dA_l(1+\delta^{-1/q})(\mq{f}(\x))^{1/q}.
\end{equation}
By the weak type inequality for $T^l$ we have
\begin{align*}
\textrm{meas}(\Omega_n(\x,\delta))
&\leq \frac{A_l^q\norm{g}_q^q}{2^d\delta^{-1}A_l^q\mq{f}(\x)}=\frac{\delta}{2^d\mq{f}(\x)}\int_{\abs{x-y}\leq 2^{-n}}\abs{f(y)}^q\, dy\\
&\leq \delta 2^{-d}\textrm{meas}(B(\x,2^{-n})=\delta \textrm{meas}(B(\x,2^{-n-1}).
\end{align*}
Hence $\textrm{meas}(B(\x, 2^{-n-1})\setminus \Omega_n(\x,\delta))\geq (1-\delta)\textrm{meas}(B(\x,2^{-n-1})$ and thus
\begin{align*}
\underset{w\in B(\x, 2^{-n-1}\setminus \Omega_n(\x,\delta))}{\textrm{inf}} \abs{T^lf(w)}
&\leq \Bigg ( \frac{1}{\textrm{meas}(B(\x, 2^{-n-1}\setminus \Omega_n(\x,\delta))}\int_{B(\x, 2^{-n-1})}\abs{T^lf(w)}^q\Bigg )^{1/q}\\
&\leq \Bigg ( \frac{1}{(1-\delta)\textrm{meas}(B(\x,2^{-n-1}))}\int_{B(\x, 2^{-n-1})}\abs{T^lf(w)}^q\Bigg )^{1/q}.
\end{align*}
We obtain 
\[
\abs{\tlog f(\x)}\lesssim \frac{1}{(1-\delta)^{1/q}}(M(\abs{T^lf}^q)(\x))^{1/q}+A_l(1+\delta^{-1/q})(M(\abs{f}^q)(\x))^{1/q}
\]
uniformly in $n$, which implies Proposition \ref{cotlarlemma}.
\end{proof}

The above proposition, in conjunction with Theorem \ref{lpforSl} immediately leads to the following:
\begin{thm}\label{large thm}
$T_*^l$ is bounded on $L^p$ for $p\in (l+2,\infty)$. 
\end{thm}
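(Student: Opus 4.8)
The plan is to combine the Cotlar-type inequality from Proposition~\ref{cotlarlemma} with the $L^p$ boundedness of $T^l$ from Theorem~\ref{lpforSl}, using the freedom to pick the auxiliary exponent $q$ in the inequality. Fix $p \in (l+2,\infty)$ and set $q = l+2$ (or, more safely, any $q \in (1,p)$ of size comparable to $l$; the point is $q < p$ so that $|f|^q \in L^{p/q}$ and $|T^lf|^q \in L^{p/q}$ for Schwartz $f$, and $p/q > 1$ so the Hardy--Littlewood maximal operator is bounded on $L^{p/q}$). First I would apply Proposition~\ref{cotlarlemma} with this $q$ and with $\delta = 1/2$ fixed, giving
\[
T^l_* f(x) \lesssim (M(|T^lf|^q)(x))^{1/q} + C_d A_l (M(|f|^q)(x))^{1/q}
\]
pointwise a.e., for every Schwartz $f$.

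Next I would take $L^p$ norms of both sides. Writing $\|(M(|g|^q))^{1/q}\|_{L^p} = \|M(|g|^q)\|_{L^{p/q}}^{1/q}$ and invoking the Hardy--Littlewood maximal theorem on $L^{p/q}$ (valid since $p/q>1$), this is $\lesssim_{p/q} \||g|^q\|_{L^{p/q}}^{1/q} = \|g\|_{L^p}$. Applying this with $g = T^l f$ and with $g = f$ yields
\[
\|T^l_* f\|_{L^p} \lesssim_{p,q} \|T^l f\|_{L^p} + C_d A_l \|f\|_{L^p} \lesssim_{p,q,d} (\mathrm{max}(p,(p-1)^{-1}) B l^{-\alpha+1} + A_l) \|f\|_{L^p},
\]
where the bound on $\|T^l f\|_{L^p}$ is Theorem~\ref{lpforSl}. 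Since $A_l = B 2^{ld/q} l^{-\alpha+1}$ for $l>0$ and $q \sim l$, the factor $2^{ld/q}$ is bounded by a dimensional constant ($2^{ld/q} \le 2^d$ when $q \ge ld$, or at worst $2^{d}$ when $q = l+2$ up to a fixed constant — one should just note $2^{ld/q} \lesssim_d 1$ here, which is exactly why $q \sim l$ was chosen). Hence $\|T^l_*\|_{L^p \to L^p} \lesssim_{p} B l^{-\alpha+1}$ for $l>0$ and $\lesssim_p B$ for $l=0$, and a density argument extends the bound from Schwartz functions to all of $L^p$.

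The main subtlety — and the only real thing to get right — is the bookkeeping of exponents: one must verify that for the chosen range $p \in (l+2,\infty)$ there is an admissible $q$ with $1 < q < p$ and $q$ large enough (proportional to $l$) that the factor $2^{ld/q}$ in $A_l$ is absorbed into a constant independent of $l$, while simultaneously $p/q$ stays bounded away from $1$ in a way that keeps the Hardy--Littlewood constant under control uniformly. Taking $q = l+2$ works: then $p/q > 1$, and $2^{ld/q} = 2^{ld/(l+2)} \le 2^d$. The implicit constant in the final bound will depend on $p$ (through both the maximal function bound on $L^{p/q}$ and the operator norm of $T^l$ on $L^p$), which is consistent with the statement. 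No serious obstacle arises; the proposition and Theorem~\ref{lpforSl} do all the analytic work, and this step is purely the assembly.
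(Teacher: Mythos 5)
Your proposal is correct and is essentially identical to the paper's own proof: both fix $\delta=1/2$ and $q=l+2$ in Proposition~\ref{cotlarlemma}, take $L^p$ norms, bound the first term via the Hardy--Littlewood maximal theorem on $L^{p/q}$ together with Theorem~\ref{lpforSl}, and observe that $2^{ld/q}=2^{ld/(l+2)}\le 2^d$ absorbs into a dimensional constant. The paper merely states the resulting operator-norm bound $l^{-\alpha+1}\frac{p^2}{p-l-2}B$ more explicitly; your $\lesssim_p$ formulation is equivalent for the purpose of this theorem.
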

\begin{proof}
Fix $\delta=1/2$ and $q=l+2$. We note that both $f\rightarrow T^lf$ and $f\rightarrow (\mq{f})^{1/q}$ are bounded operators on $L^p$ for $p\in (l+2,\infty)$, with operator norms bounded by $pBl^{-\alpha+1}$ ($pB$ for $l=0$) and $p/(p-l-2)$ respectively (upto multiplication by a dimensional constant). Hence, by Proposition \ref{cotlarlemma}, we obtain
\[
\norm{T^l_*}_{L^p}\lesssim_d 
\begin{cases}
B, &l=0\\
l^{-\alpha+1}\frac{p^2}{p-l-2}B, &l>0.
\end{cases}
\]
\end{proof}

\section{Weak (1,1) Boundedness of the Maximal Operator }
\label{weakmax}
In this section, we will prove that each of the pieces $T^l_*$ is of weak type $(1,1)$, with the respective norm $\lesssim_d Bl^{-\alpha+1}$ ($\lesssim_d B$ for $l=0)$. Combining this result with Theorem \ref{large thm} and interpolating, we will obtain bounds on the operator norm of $T^l_*$ on $L^p$ for all $p\in (1,\infty)$, with a decay of $l^{-\alpha+2}$. This will allow us to achieve our final goal of summing up the pieces together to get bounds on the operator $S^*$. 

The proof we give here is essentially the same as the one for Theorem \ref{weak thm}, except for one notable difference. In order to obtain a weak bound for the "good" function in the Calder\'on-Zygmund decomposition, we use the bounds on $L^{p_l}$ (as given by Theorem \ref{large thm}, with $p_l=4l)$ in place of those on $L^2$, noting that the operator norm in the former case is $\lesssim Bl^{-\alpha+2}$ (for $l>0$). The upshot is that the power of $l$ in the weak (1,1) norm of $T^l_*$ goes up by $1$. Instead of repeating the entire argument, we sketch an outline.

\begin{thm}
\label{weak2}
$T_*^l$ is weak (1,1) bounded with
\[
\norm{T_*^l}_{L^1\rightarrow L^{1,\infty}}\lesssim
\begin{cases}
B, &l=0,\\
Bl^{-\alpha+2},&l>0. 
\end{cases}
\]
\end{thm}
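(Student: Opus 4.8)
The plan is to mimic the proof of Theorem \ref{weak thm}, performing a Calder\'on-Zygmund decomposition of $f\in L^1$ at an appropriate height, and splitting $T^l_*$ into its action on the good and bad parts. The only genuine change is that in handling the good part we no longer have an $L^2$ bound at our disposal for the \emph{maximal} operator; instead we invoke Theorem \ref{large thm}, which gives $L^{p_l}$ boundedness of $T^l_*$ with $p_l=4l$ and operator norm $\lesssim_d Bl^{-\alpha+2}$ (for $l>0$; for $l=0$ one argues directly as in Theorem \ref{weak thm}, obtaining norm $\lesssim B$, so from now I focus on $l>0$ and set $B=1$).

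First, fix $\sigma>0$ and perform the Calder\'on-Zygmund decomposition of $f$ at height $l^{\alpha-2}\sigma$, writing $f=f_0+f_1$ with the usual cubes $\{I_k\}$, centres $\{a_k\}$, so that $\|f_1\|_{L^{p_l}}^{p_l}\lesssim (l^{\alpha-2}\sigma)^{p_l-1}\|f\|_{L^1}$ and $\operatorname{meas}(\bigcup_k I_k)\lesssim l^{-\alpha+2}\sigma^{-1}\|f\|_{L^1}$. For the bad part $f_0$: the maximal operator $T^l_*$ is pointwise dominated by $T^l_*f(x)=\sup_n|\tlog f(x)|$, but crucially, on the complement of $\bigcup_k 2I_k$ one needs a pointwise control of $\sup_n|\tlog f_0(x)|$ by the Calder\'on-Zygmund kernel differences $\int_{(\cup 2I_k)^c}\sum_{j\le n}|\tlj(x-y)-\tlj(x-a_k)|\,dx$; here the truncation in $n$ is harmless since the estimate (\ref{c}) from Theorem \ref{weak thm} — namely $\sum_{j\in\ZZ}\int_{|x|\ge 2t}|\tlj(x-y)-\tlj(x)|\,dx\lesssim l^{-\alpha+1}$ for $|y|\le t$ — is obtained by summing nonnegative terms over all $j$, so any partial sum $\sum_{j\le n}$ obeys the same bound. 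This yields $\operatorname{meas}\{x\colon T^l_*f_0(x)>\sigma/2\}\lesssim_d l^{-\alpha+1}\sigma^{-1}\|f\|_{L^1}+l^{-\alpha+2}\sigma^{-1}\|f\|_{L^1}\lesssim l^{-\alpha+2}\sigma^{-1}\|f\|_{L^1}$, the second term coming from the measure of $\bigcup 2I_k$ (and the larger power of $l$ is what governs the final bound).

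For the good part $f_1$, use Chebyshev at exponent $p_l=4l$ together with Theorem \ref{large thm}:
\[
\operatorname{meas}\{x\colon T^l_*f_1(x)>\sigma/2\}\lesssim \sigma^{-p_l}\|T^l_*f_1\|_{L^{p_l}}^{p_l}\lesssim \sigma^{-p_l}(l^{-\alpha+2})^{p_l}\|f_1\|_{L^{p_l}}^{p_l}.
\]
Insert the bound $\|f_1\|_{L^{p_l}}^{p_l}\lesssim (l^{\alpha-2}\sigma)^{p_l-1}\|f\|_{L^1}$ from the choice of truncation height; the powers of $l^{\alpha-2}$ and $\sigma$ telescope, leaving $\operatorname{meas}\{x\colon T^l_*f_1(x)>\sigma/2\}\lesssim l^{-\alpha+2}\sigma^{-1}\|f\|_{L^1}$, exactly matching the bad-part estimate. (One must check the implicit constant in Theorem \ref{large thm} with $p=p_l=4l$, $q=l+2$, which behaves like $p^2/(p-l-2)\sim l$ — this only costs a fixed power of $l$ already absorbed; more carefully, one may fix $p_l=4l+O(1)$ to keep $p-l-2$ comparable to $l$.) Adding the two contributions proves the claimed weak-type bound.

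The main obstacle is the bad-part analysis: one must be sure that the maximal truncation $\sup_n$ does not destroy the cancellation used in the Hörmander kernel estimate. The point, as noted, is that \eqref{c} bounds a sum of nonnegative quantities over all $j\in\ZZ$, so it automatically dominates every truncated sum $\sum_{j\le n}$ uniformly in $n$; hence $\int_{(\cup 2I_k)^c}\sup_n|\sum_{j\le n}\tlj * f_0|\,dx$ is controlled the same way as in Theorem \ref{weak thm}. A secondary bookkeeping point is to track the constants in Theorem \ref{large thm} at $p=p_l$ so that the polynomial-in-$l$ losses are subsumed into the stated $l^{-\alpha+2}$; choosing the Calder\'on-Zygmund height as $l^{\alpha-2}\sigma$ (rather than $l^{\alpha-1}\sigma$ as in Theorem \ref{weak thm}) is precisely what balances the two pieces and produces the extra power of $l$ relative to the weak bound for $T^l$ itself.
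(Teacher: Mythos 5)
Your proposal follows the same high-level scheme as the paper's (sketchy) proof: a Calder\'on--Zygmund decomposition, the kernel estimate \eqref{c} for the bad part, and Chebyshev at $p_l\sim l$ via Theorem~\ref{large thm} for the good part. However there are two genuine points of divergence, both of which favour your version. First, the paper reuses the Calder\'on--Zygmund height $l^{\alpha-1}\sigma$ from Theorem~\ref{weak thm}, but with an $L^{p_l}$ bound in place of the $L^2$ bound this height no longer balances: tracking exponents one finds $\operatorname{meas}\{T^l_*f_1>\sigma/2\}\lesssim \sigma^{-1}l^{(-\alpha+2)p_l+(\alpha-1)(p_l-1)}\|f\|_1 = \sigma^{-1}l^{p_l-\alpha+1}\|f\|_1$, which grows like $l^{4l}$ rather than decaying. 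Your choice $l^{\alpha-2}\sigma$ is the one that makes the exponents telescope to $l^{-\alpha+2}$, and it also keeps the bad-part measure estimate $\operatorname{meas}(\bigcup 2I_k)\lesssim l^{-\alpha+2}\sigma^{-1}\|f\|_1$ at the target rate. Second, the paper fixes $n$ throughout, obtains a weak bound for each single truncation $\tlog$, and then ``takes the supremum over $n$''; this step is not valid in general for weak-type inequalities. You correctly argue at the level of the maximal operator itself, noting for the bad part that $\sup_n\big|\sum_{j\le n}\tlj * f_0\big|\le\sum_j|\tlj * f_0|$ so the Hörmander kernel bound \eqref{c}, being a sum of nonnegative terms, controls all truncations uniformly, and for the good part invoking Theorem~\ref{large thm} which is already stated for $T^l_*$.

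One residual gap that you gesture at but do not close: when applying Chebyshev at exponent $p_l=4l$, the implicit dimensional constant $C_d$ from $\|T^l_*\|_{L^{p_l}}\le C_d l^{-\alpha+2}$, as well as the $2^d$ from the Calder\'on--Zygmund decomposition, are raised to the power $p_l$, giving a factor $(2^{d+1}C_d)^{p_l}$ that grows exponentially in $l$ unless absorbed. The standard fix is to decompose at height $c_0 l^{\alpha-2}\sigma$ with $c_0$ a small dimensional constant chosen so that $2^{d+1}C_d c_0\le 1/2$; then $(2C_d)^{p_l}(2^dc_0)^{p_l-1}\le (2^dc_0)^{-1}$ stays bounded while the bad-part estimate only acquires the fixed factor $c_0^{-1}$. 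This is a minor but necessary adjustment, and worth stating explicitly since it is precisely the kind of place where a growing exponent can silently undo a polynomial bound.
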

\begin{proof}
Let $n\in \ZZ$ and $f\in L^1(\RR^d)$. Fix $\sigma>0$. We sketch the proof for $l>0$ (the one for $l=0$ proceeds in almost the same way). Also, we might assume $B=1$. As before, we make a Calder\'on-Zygmund decomposition of $f$ at the level $l^{\alpha-1}\sigma$. For the "good" function $f_1$, we use the bound $\norm{\tlog}_{L^{P_l}}\lesssim Bl^{-\alpha+2}$ on $L^{p_l}$ with $p_l=4l$ for $l>0$ and $\norm{T^0_n}_{L^{3}}\lesssim B$ on $L^{3}$. We obtain for $l>0$
\[
\norm{\tlog f_1}_{L^{1,\infty}}\lesssim Bl^{-\alpha+2}\norm{f}_{L^1}
\]
(and the corresponding result for $l=0$). The argument for the "bad" part $f_0$ proceeds the exact way as in proof of Theorem \ref{large thm}, with the index $n$ playing no real role, and we get for $l>0$
\[
\norm{\tlog f_0}_{L^{1,\infty}}\lesssim Bl^{-\alpha+1}\norm{f}_{L^1}
\]
(and the corresponding result for $l=0$). The result then follows by combining the two estimates followed by taking a supremum over $n$.
\end{proof}
Theorems \ref{large thm} and \ref{weak thm} together via $L^p$ interpolation lead to 
\begin{thm}\label{large thm2}
$T_*^l$ is bounded on $L^p$ for $p\in (1,\infty)$, with 
\[
\norm{T^l_*}_{L^p}\lesssim_d 
\begin{cases}
B\text{ max }(p,(p-1)^{-1}), &l=0\, , p\in(1,\infty)\\
B\text{ max }(p,(p-1)^{-1})l^{-\alpha+2}, &l>0\, , p\in(1,4l)\\
Bpl^{-\alpha+1}, &l>0\, , p\in[4l,\infty).
\end{cases}
\]
\end{thm}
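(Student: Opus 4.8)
The plan is to combine the two endpoint-type estimates already available for $T^l_*$: the weak $(1,1)$ bound from Theorem~\ref{weak2} and the strong $L^p$ bound for large $p$ from Theorem~\ref{large thm}, via the Marcinkiewicz interpolation theorem. The subtlety is that $T^l_*$ is a sublinear (not linear) operator, but Marcinkiewicz interpolation applies to sublinear operators, and $T^l_*$ is manifestly sublinear since it is a supremum of sublinear operators $f\mapsto \tlog f$; so this causes no real trouble.

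First I would record the two inputs. Theorem~\ref{weak2} gives
\[
\norm{T^l_*}_{L^1\to L^{1,\infty}}\lesssim_d
\begin{cases} B, & l=0,\\ Bl^{-\alpha+2}, & l>0,\end{cases}
\]
while Theorem~\ref{large thm} (with the norm extracted from its proof, taking e.g.\ $p=4l$, which lies in $(l+2,\infty)$ for $l>0$) gives that $T^l_*$ is bounded on $L^{p_l}$ with $p_l=4l$ for $l>0$ and on, say, $L^{3}$ for $l=0$, with operator norm $\lesssim_d Bl^{-\alpha+2}$ for $l>0$ (resp.\ $\lesssim_d B$ for $l=0$). For $l=0$ one interpolates between $L^1\to L^{1,\infty}$ and $L^{3}$, and then runs a duality argument on $L^{p}$ for $p$ above the self-dual point if one wants the full range; but actually since $T^0_*$ is already known bounded on $L^p$ for $p\in(1,\infty)$ by Theorem~\ref{large thm} together with $p=2$ from Theorem~\ref{l2bound}-type reasoning (or simply cite Theorem~\ref{large thm} for large $p$ and interpolate with weak $(1,1)$ for small $p$), the claimed bound $B\,\text{max}(p,(p-1)^{-1})$ follows from tracking the Marcinkiewicz constant, which blows up like $(p-1)^{-1}$ as $p\to 1$ and like $p$ as $p\to\infty$.

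For $l>0$ the main case is $p\in(1,4l)$: Marcinkiewicz interpolation between the weak $(1,1)$ bound (constant $\lesssim Bl^{-\alpha+2}$) and the strong $L^{4l}$ bound (constant $\lesssim Bl^{-\alpha+2}$) yields boundedness on every $L^p$ with $1<p<4l$ and operator norm $\lesssim_d Bl^{-\alpha+2}\,\text{max}(p,(p-1)^{-1})$ — here one just needs that the two endpoint constants agree up to the dimensional factor, so the interpolated constant is that common value times the Marcinkiewicz factor $\text{max}(p,(p-1)^{-1})$ (more precisely a factor behaving like $(p-1)^{-1}$ near $1$ and bounded on compact subsets of $(1,4l)$, and one can absorb the mild dependence on $4l$ near the upper endpoint into the stated $\text{max}(p,(p-1)^{-1})$ since $p<4l$ there). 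For $p\in[4l,\infty)$ one simply quotes Theorem~\ref{large thm} directly, whose proof already furnishes the bound $\lesssim_d Bpl^{-\alpha+1}$ (this is the regime where the better power $l^{-\alpha+1}$, rather than $l^{-\alpha+2}$, is retained, at the cost of the linear-in-$p$ growth coming from the $p^2/(p-l-2)$ factor, which for $p\geq 4l$ is $\lesssim p$).

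The only genuine point requiring care — and the step I expect to be the main obstacle in writing this cleanly — is bookkeeping the constants through Marcinkiewicz interpolation so that the stated dependence on $p$ (namely $\text{max}(p,(p-1)^{-1})$) and on $l$ (the powers $l^{-\alpha+2}$ versus $l^{-\alpha+1}$) come out exactly as claimed, in particular making sure that interpolating up to the moving endpoint $4l$ does not secretly introduce an extra power of $l$. This is handled by noting that the Marcinkiewicz constant for interpolation between $L^1$ and $L^{p_0}$ at exponent $p\in(1,p_0)$ is $\lesssim \big(\tfrac{1}{p-1}+\tfrac{1}{p_0-p}\big)$ up to absolute constants, and for $p$ bounded away from $4l$ this is $\lesssim (p-1)^{-1}\lesssim \text{max}(p,(p-1)^{-1})$, while for $p$ close to $4l$ one instead uses the strong $L^{4l}$ bound together with the weak $L^p$ bound at a slightly larger fixed exponent, or simply observes that $p\mapsto\norm{T^l_*}_{L^p}$ is, by the $p\in[4l,\infty)$ case, already $\lesssim Bpl^{-\alpha+1}\lesssim Bpl^{-\alpha+2}$ at $p=4l$, closing the gap. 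Assembling these pieces gives the three-case estimate in the statement.
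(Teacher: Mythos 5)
Your proposal follows the paper's proof essentially verbatim: interpolate (Marcinkiewicz, for sublinear operators) between the weak $(1,1)$ bound of Theorem~\ref{weak2} and the strong $L^{4l}$ bound extracted from Theorem~\ref{large thm}, and quote Theorem~\ref{large thm} directly for $p\geq 4l$ using $p^2/(p-l-2)\lesssim p$ there. You are in fact slightly more careful than the paper in handling the blow-up of the Marcinkiewicz constant near the upper endpoint $p=4l$ (the paper simply interpolates up to $4l$ without comment), and your fix — interpolating up to a slightly larger exponent such as $8l$, where the strong bound is still $\lesssim Bl^{-\alpha+2}$ — is the right way to close that small gap.
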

\begin{proof}
The result for $l=0$ is a clear outcome of $L^p$ interpolation and the estimates proved earlier. For $l>0$ and $p\in [4l,\infty)$, it follows easily from Theorem \ref{large thm} and the observation that $p^2/(p-l-2)\lesssim p$ for $p\geq 4l$.
For $l>0$ and $p\in (1,4l)$, it is an outcome of interpolating between the weak (1,1) estimate in Theorem \ref{weak2} and the one contained in Theorem \ref{large thm} for $p=4l$, again making the observation that $p^2/(p-l-2)\lesssim p=4l$.
\end{proof}

Finally, we give the proof of Theorem \ref{main thm}.
\begin{proof}
It is enough to prove the theorem for $S_n$ for a fixed $n\in \ZZ$, as the result then follows by taking the supremum over $n\in \ZZ$. For $p\in (1,\infty)$ and a Schwartz function $f$, we have
\[\norm{S_nf}_{L^p}\leq \sum_{l\geq 0}\norm{\slog f}_{L^p}\leq \sum_{l\geq 0}\norm{\tlog f}_{L^p}+\sum_{l\geq 0}\norm{\sum_{j\leq n}\klj *\mathcal{F}^{-1}(1-\Psi_j)*f}_{L^p}.\]
Using Theorem \ref{large thm2} for the first sum and Theorem \ref{errorthm} for the second one (and noting that summing up for $j\leq n$ instead of $j\in \ZZ$ does not affect the proof), we get
\[\norm{S_nf}_{L^p}\lesssim B \text{ max }(p,(p-1)^{-1})\sum_{l\geq 1}(l^{-\alpha+2}+2^{-l})\lesssim_p B.\]
For weak (1,1) boundedness, we argue in a similar way, only using Theorem \ref{weak2} this time in place of Theorem \ref{large thm2}.
\end{proof}

\nocite{*}
\bibliography{main}

\end{document}